\documentclass[11pt]{amsart}
%%%%%%%%%%%%%%%%%%%%%%%%%%%%%%%%%%%%%%%%%%%%%%%%%%%%%%%%%%%%%%%%%%%%%%%%%%%%%%%%%%%%%%%%%%%%%%%%%%%%%%%%%%%%%%%%%%%%%%%%%%%%%%%%%%%%%%%%%%%%%%%%%%%%%%%%%%%%%%%%%%%%%%%%%%%%%%%%%%%%%%%%%%%%%%%%%%%%%%%%%%%%%%%%%%%%%%%%%%%%%%%%%%%%%%%%%%%%%%%%%%%%%%%%%%%%
\usepackage{amsfonts}
\usepackage{amsmath}
\usepackage{amssymb}
\usepackage{latexsym,graphicx, epsfig}

\usepackage[usenames,dvipsnames]{color}

%%%%%%%%%%%%%%%%%%%%%%%%%%%%%%%%%%%%%%%%%%%%%%%%%%%%%%%%%%%%%%%%%%%%%%%
%This packages adds support of handling eps images to package graphics
%or graphicx with option pdftex. If an eps image is detected, epstopdf is
%automatically called to convert it to pdf format.
\usepackage{epstopdf}
%%%%%%%%%%%%%%%%%%%%%%%%%%%%%%%%%%%%%%%%%%%%%%%%%%%%%%%%%%%%%%%%%%%%%%%

%%%%%%%%%%%%%%%%%%%%%%%%%%%%%%%%%%%%%%%%%%%%%%%%%%%%%%%%%%%%%%%%%%%%%%%
% various features for using graphics, including subfigure, captions, subcaptions etc.
\usepackage{graphicx}
\usepackage[font=sl,labelfont=bf]{caption}
\usepackage{subcaption}
%%%%%%%%%%%%%%%%%%%%%%%%%%%%%%%%%%%%%%%%%%%%%%%%%%%%%%%%%%%%%%%%%%%%%%%

%%%%%%%%%%%%%%%%%%%%%%%%%%%%%%%%%%%%%%%%%%%%%%%%%%%%%%%%%%%%%%%%%%%%%%%
% This package improves the interface for defining floating objects such
% as figures and tables in LaTeX.
% http://www.ctan.org/pkg/float
\usepackage{float}
\restylefloat{table}
%%%%%%%%%%%%%%%%%%%%%%%%%%%%%%%%%%%%%%%%%%%%%%%%%%%%%%%%%%%%%%%%%%%%%%%

%%% use for diagonals in the table's cells
%\usepackage{slashbox}  %Removed by Ares

%%%%%%%%%%%%%%%%%%%%%%%%%%%%%%%%%%%%%%%%%%%%%%%%%%%%%%%%%%%%%%%%%%%%%%%

%%%%%%%%%%%%%%%%%%%%%%%%%%%%%%%%%%%%%%%%%%%%%%%%%%%%%%%%%%%%%%%%%%%%%%%
% This package gives the enumerate environment an optional argument
% which determines the style in which the counter is printed.
% http://www.ctex.org/documents/packages/table/enumerate.pdf
\usepackage{enumerate}
%%%%%%%%%%%%%%%%%%%%%%%%%%%%%%%%%%%%%%%%%%%%%%%%%%%%%%%%%%%%%%%%%%%%%%%

\setcounter{MaxMatrixCols}{10}
%TCIDATA{OutputFilter=Latex.dll}
%TCIDATA{Version=5.00.0.2557}
%TCIDATA{<META NAME="SaveForMode" CONTENT="1">}
%TCIDATA{LastRevised=Tuesday, March 13, 2007 16:06:46}
%TCIDATA{<META NAME="GraphicsSave" CONTENT="32">}
%TCIDATA{Language=American English}

\setlength{\parindent}{0in}
 \setlength{\parskip}{0.5\baselineskip}
\setlength{\footskip}{30pt}
\tolerance=1000
\setlength{\voffset}{-0.5in}
\setlength{\hoffset}{-0.5in}
\setlength{\textheight}{9in}
 \setlength{\textwidth}{6in}

\newcommand{\cF}{\mathcal{F}}

\newcommand{\bE}{\mathbb{E}}
\newcommand{\bN}{\mathbb{N}}
\newcommand{\bP}{\mathbb{P}}
\newcommand{\bR}{\mathbb{R}}

\newtheorem{theorem}{Theorem}
\theoremstyle{plain}

\newtheorem{definition}[theorem]{Definition}
\newtheorem{proposition}[theorem]{Proposition}

\newtheorem{remark}[theorem]{Remark}
\numberwithin{equation}{section}
\numberwithin{theorem}{section}

\def\bth{\boldsymbol{\theta}}
\def\bx{\boldsymbol{x}}
\def\by{\boldsymbol{y}}

\begin{document}
\title[Statistics with Space-Only Noise]{Statistical Analysis of Some Evolution Equations Driven by Space-Only Noise}
\author{Igor Cialenco}
\curraddr[Igor Cialenco]{Department of Applied Mathematics, Illinois Institute of Technology\\
W 32nd Str, John T. Rettaliata Engineering Center, Room 208, \\
Chicago, IL 60616, USA}
\email[Igor Cialenco]{cialenco@iit.edu}
\urladdr{http://math.iit.edu/$\sim$igor}

\author{Hyun-Jung Kim}
\curraddr[Hyun-Jung Kim]{Department of Applied Mathematics, Illinois Institute of Technology\\
W 32nd Str, John T. Rettaliata Engineering Center, Room 208, \\
Chicago, IL 60616, USA}
\email[Hyun-Jung Kim]{hkim129@iit.edu}
\urladdr{https://sites.google.com/view/hyun-jungkim}

\author{Sergey V. Lototsky}
\curraddr[Sergey V. Lototsky]{Department of Mathematics, USC\\
Los Angeles, CA 90089, USA}
\email[Sergey V. Lototsky]{lototsky@math.usc.edu}
\urladdr{http://www-bcf.usc.edu/$\sim$lototsky}

\subjclass[2010]{Primary 62F12; Secondary 60H15, 93E10}

\keywords{Stochastic PDEs, MLE, Bayesian estimators, local asymptotic normality, regular statistical model, parabolic Anderson model, shell model, multi-channel model}

\begin{abstract}
We study the statistical properties of stochastic evolution equations driven by space-only noise, either additive or multiplicative. While forward problems, such as  existence, uniqueness, and regularity of the solution, for such equations have been studied, little is known about inverse problems for these equations. We exploit the somewhat unusual structure of the observations coming from these equations that leads to an interesting interplay between classical and non-traditional statistical models. We derive several types of estimators for the drift and/or diffusion coefficients of these equations, and prove their relevant properties.
%In particular, the evolution equation  represents a large class of  multi-channel models with  Gaussian
%observations that are independent but not identically distributed.
\end{abstract}

\date{ {\small This version: \today}} %

\maketitle

\section{Introduction}
While the forward problems, existence, uniqueness, and regularity of the solution, for stochastic evolution equations have been extensively studied over the past few decades (cf. \cite{LototskyRozovsky2017Book,RozovskyRozovsky2018Book} and references therein), the  literature on statistical inference for SPDEs is, relatively speaking, limited. We refer to the recent survey \cite{Cialenco2018} for an overview of the literature and existing methodologies on statistical inference for parabolic SPDEs. In particular, little is known about the inverse problems for stochastic evolutions equations driven by \textit{space-only noise}, and the main goal of this paper is to investigate the parameter estimation problems for such equations. The somewhat unusual structure of the space-only noise exhibits interesting statistical inference problems that stay at the interface between classical and non-traditional statistical models. We consider two  classes of equations, corresponding to
two  types of noise,  \textit{additive} and  \textit{multiplicative}. As an illustration, let us take a heat equation
$$
u_t=\boldsymbol{\Delta}u, \quad t>0,
$$
on some domain and with some initial data, and where $\boldsymbol{\Delta}$ denotes the Laplacian operator.
Customarily, a random perturbation to this equation can be additive
\begin{equation}\label{eq1}
u_t=\boldsymbol{\Delta}u+\dot{W},
\end{equation}
representing a random heat source, or multiplicative
\begin{equation}\label{eq2}
u_t=u_{xx} +u\dot{W},
\end{equation}
representing  a random potential. In the case of space-dependent noise and pure point spectrum of the Laplacian $\boldsymbol{\Delta}$, one can also consider a {\em shell} version of \eqref{eq2}:
\begin{equation}
\label{eq3}
u_t=\boldsymbol{\Delta}u+\sum_k u_k \xi_k h_k(x),
\end{equation}
in which  $\{h_k,\ k\geq 1\}$ are the normalized eigenfunctions of the Laplacian, $u_k=(u,h_k),\ \  \xi_k=\dot{W}(h_k)$.
Similar decoupling of the Fourier modes is used to study nonlinear equations in fluids mechanics, both deterministic \cite[Section 8.7]{Frisch1995} and stochastic \cite{Glatt-HoltzZiane2008,FriedlanderGlatt-HoltzVicol2016};  the term ``shell model''  often appears in that context.

Our objective is to study  abstract versions of \eqref{eq3} and \eqref{eq1} with unknown coefficients:
\begin{equation}\label{eq5}
\dot{u}+\theta Au = \sigma \sum_{k=1}^{\infty} q_k u_k h_k \xi_k,
\end{equation}
and
\begin{equation}\label{eq4}
\dot{u}+\theta Au = \sigma \dot{W}^{Q},
\end{equation}
where
\begin{itemize}
	\item $A$ is a linear operator in a Hilbert space $H$;
    \item $\{h_k\}_{k\in\bN}\subset H$ are the normalized eigenfunctions of $A$ that form a complete orthonormal system in $H$, with corresponding  eigenvalues $\mu_k>0$, $k\in\bN$;
	\item $q_k>0, \ k\in\bN$, are known constants;
	\item $\theta>0$, $\sigma>0$ are unknown numbers (parameters of interest);
	\item $\xi_k, \ k\in\bN$, are independent and identically distributed (i.i.d.) standard normal random variables on the underlying probability space
	$(\Omega,\mathcal{F},\mathbb{P})$ and  $\dot{W}^{Q}=\sum_{k=1}^{\infty}q_k\xi_kh_k$;
	\item $u_k=u_k(t)=(u,h_k)_H$,  $t\in [0,T], k\in\bN$.
\end{itemize}
In each case, the solution is defined as
\begin{equation}\label{eq:FExp}
u(t)=\sum_{k=1}^{\infty}u_k(t)h_k,
\end{equation}
with
\begin{equation}\label{FK-mlt}
u_k(t)=u_k(0)\exp\Big(-(\theta \mu_k-\sigma q_k \xi_k)t\Big)
\end{equation}
for \eqref{eq5}, and
\begin{equation}
\label{FK-add}
u_k(t)=u_k(0)e^{-\theta \mu_k t}+\frac{\sigma q_k}{\theta \mu_k}\left(1-e^{-\theta \mu_k t}\right)\xi_k
\end{equation}
for \eqref{eq4}.
 For both models \eqref{eq5} and \eqref{eq4}, we assume that the observations are available in the Fourier space, namely, the observer measures the values of the Fourier modes $u_k(t)$, continuously in time for $t\in[0,T]$. In addition, for \eqref{eq4} we also consider statistical experiment when the observations are performed in physical space.
The main results of this paper are summarized as follows:
\begin{enumerate}
\item For equation \eqref{eq5}, knowledge of all $\mu_k$ is required; then, under some additional technical assumptions, the problem of joint estimation of $\theta$ and $\sigma$, using measurements in the Fourier space, leads to  statistical experiment with LAN (local asymptotic normality) and several other regularity properties.	Consequently, we prove strong consistency and asymptotic normality of maximum likelihood estimators (MLE) and Bayesian estimators for $\theta$ and $\sigma$; see Section~\ref{sec:ShellModel}.
\item For equation \eqref{eq4}, the values of $\theta\mu_k$ can be determined exactly from the observations of $u_k(t)$ at two or three time points; estimation of $\sigma$ is then reduced to estimation of variance in a normal population with known mean; see Section~\ref{sec:AdditiveFourierDomain}. Using special structure of the solution of \eqref{eq4}, and assuming zero initial conditions, and $q_k=1$, we derive consistent and asymptotically normal estimators of $\theta$ and $\sigma$, assuming measurements in the physical domain; see Section~\ref{sec:disc-samp-add}.
\end{enumerate}
In Section~\ref{sec:examples}, we present several illustrative examples, while Section~\ref{sec:Numerics} is dedicated to some numerical experiments that exemplify the theoretical results of the paper.

Throughout the paper,  given two sequences of numbers $\{a_n,\ n\geq 1\}$  and $\{b_n,\ n\geq 1\}$, we write $a_n \sim b_n$ if there exists a positive number $c$ such that $ \lim_{n\to \infty} a_n/b_n=c$.

\section{The Shell Model}\label{sec:ShellModel}

In this section we study the stochastic evolution equation \eqref{eq5}, starting with the existence and uniqueness of the solution, and continuing with parameter estimation problem for $\theta$ and $\sigma$ within the LAN framework of~\cite{IbragimovKhasminskiiBook1981}.  For better comparison with existing results, such as \cite{ZCG2018} and \cite{HuebnerRozovskii1995}, we consider a slightly more general version of \eqref{eq5}:
\begin{equation}\label{eq5g}
\dot{u}+(\theta A+A_0)u
=\sum_{k=1}^{\infty} (\sigma q_k+p_k)u_k\xi_kh_k,\ \ t>0,
\end{equation}
with known $q_k>0, \ p_k\geq 0$ and the
operators $A$ and $A_0$ such that
\begin{equation}
\label{eq:oper}
Ah_k=\mu_kh_k, \ A_0h_k=\nu_kh_k,
\end{equation}
and the real  numbers $\mu_k, \nu_k$ are known. The numbers
$\theta$ and $\sigma$ are unknown and belong to an open set
$\Theta\subseteq \bR\times (0,+\infty)$.

The solution of \eqref{eq5g} is defined by \eqref{eq:FExp}, with
\begin{equation}
\label{FK-mlt-g}
u_k(t)=u_k(0)\exp\Big(-(\theta \mu_k+\nu_k)t+(\sigma q_k+p_k) \xi_k \,t\Big).
\end{equation}

\begin{theorem}
\label{th2a}
Assume that $u(0)\in H$, $u_k(0)\not=0$, $k\geq 1$,
 and there exists a real number $C^*$ such that for all $(\theta,\sigma)\in \Theta$
 and $k\geq 1$,
 $$
 \theta\mu_k+\nu_k>C_*.
 $$

If
\begin{equation}
\label{eq:2a}
\lim_{k\to \infty} \frac{(\sigma q_k+p_k)^2}{\theta\mu_k+\nu_k}=0,
\end{equation}
for all  $(\theta,\sigma)\in\Theta$, then $u(t)\in L_2(\Omega; H)$ for all  $t>0$ and
$\bE\|u(t)\|_H^2\leq C(t,\theta,\sigma)\|u(0)\|_H^2$.

If there exist  $T>0$ and  $\bar{C}_T\geq 0$ such that
\begin{equation}\label{eq:2a-T}
 \Big( T(\sigma q_k+p_k)^2-4(\theta \mu_k+\nu_k)\Big)\leq 2\bar{C}_T,
\end{equation}
for all  $k\geq 1$ and $(\theta,\sigma)\in\Theta$,
then
 $u(t)\in L_2(\Omega; H)$ for all  $t\in [0,T]$ and
$\bE\|u(t)\|_H^2\leq \|u(0)\|_H^2\, e^{T\bar{C}_T}$.
\end{theorem}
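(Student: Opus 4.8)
The plan is to reduce the estimate to a single explicit Gaussian moment computation for each Fourier mode, and then sum over $k$. Since $\{h_k\}_{k\in\bN}$ is a complete orthonormal system, Parseval's identity gives $\|u(t)\|_H^2=\sum_{k\ge1}u_k(t)^2$, and by Tonelli's theorem $\bE\|u(t)\|_H^2=\sum_{k\ge1}\bE[u_k(t)^2]$, so it is enough to estimate each term. Squaring \eqref{FK-mlt-g} gives $u_k(t)^2=u_k(0)^2\exp\!\big(-2(\theta\mu_k+\nu_k)t+2(\sigma q_k+p_k)\,\xi_k t\big)$, and applying the Gaussian moment generating function $\bE[e^{\lambda\xi_k}]=e^{\lambda^2/2}$ with $\lambda=2(\sigma q_k+p_k)t$ yields the closed form
\[
\bE[u_k(t)^2]=u_k(0)^2\,\exp\big(E_k(t)\big),\qquad E_k(t):=2(\sigma q_k+p_k)^2t^2-2(\theta\mu_k+\nu_k)t .
\]
Thus the whole problem reduces to controlling the scalar exponent $E_k(t)$ uniformly in $k$.

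For the first assertion I would fix $t>0$ and show that $\sup_k E_k(t)<\infty$. Writing $E_k(t)=2t\big((\sigma q_k+p_k)^2t-(\theta\mu_k+\nu_k)\big)$ and invoking \eqref{eq:2a}, for each fixed $t$ the ratio $(\sigma q_k+p_k)^2t/(\theta\mu_k+\nu_k)\to0$, so the quadratic-in-$t$ noise contribution is eventually dominated by the linear dissipation and $E_k(t)\le0$ for all large $k$ with $\theta\mu_k+\nu_k\ge0$; the uniform lower bound $\theta\mu_k+\nu_k>C_*$ takes care of the indices where $\theta\mu_k+\nu_k$ is small or negative, keeping the remaining finitely many terms bounded. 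Hence $M(t,\theta,\sigma):=\sup_kE_k(t)<\infty$ and
\[
\bE\|u(t)\|_H^2=\sum_{k\ge1}\bE[u_k(t)^2]\le e^{M(t,\theta,\sigma)}\sum_{k\ge1}u_k(0)^2=C(t,\theta,\sigma)\|u(0)\|_H^2 ,
\]
which is the desired bound since $u(0)\in H$.

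For the second assertion I need a bound uniform over $t\in[0,T]$, and the key structural remark is that $E_k(\cdot)$ is an upward parabola in $t$ (its leading coefficient $2(\sigma q_k+p_k)^2$ is positive). Therefore its maximum over the compact interval $[0,T]$ is attained at an endpoint; since $E_k(0)=0$ one gets $\max_{t\in[0,T]}E_k(t)=\max\{0,\,E_k(T)\}$. Assumption \eqref{eq:2a-T} is precisely the hypothesis that bounds this endpoint value $E_k(T)$ by $T\bar C_T$, uniformly in $k$, and together with $\bar C_T\ge0$ it gives $E_k(t)\le T\bar C_T$ for every $k$ and every $t\in[0,T]$. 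Summing as before yields $\bE\|u(t)\|_H^2\le e^{T\bar C_T}\sum_{k\ge1}u_k(0)^2=\|u(0)\|_H^2\,e^{T\bar C_T}$.

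The one genuine point to get right is the sign and domination bookkeeping in the exponent: because the noise is space-only, $\xi_k$ appears linearly in the exponent of $u_k(t)$, but after taking the Gaussian expectation it contributes a term growing \emph{quadratically} in $t$, whereas the dissipation is only linear in $t$. Conditions \eqref{eq:2a} and \eqref{eq:2a-T} are exactly the quantitative statements that this quadratic noise term is dominated by the linear dissipation---asymptotically in $k$ for each fixed $t$, respectively up to the terminal time $t=T$. I expect this monotonicity-and-sign analysis of $E_k(t)$, rather than any deeper analytic difficulty, to be the main thing to verify carefully; the interchange of expectation and summation is routine by Tonelli since all summands are nonnegative.
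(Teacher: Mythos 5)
Your overall strategy --- Parseval plus Tonelli to reduce everything to $\bE u_k^2(t)$, the explicit Gaussian moment generating function for each mode, and then a uniform-in-$k$ bound on the scalar exponent --- is exactly the paper's, and your treatment of the first assertion is fine. The problem is in the second assertion, and it is a concrete mismatch of constants rather than a conceptual one. Your moment computation is the correct one: with $\lambda=2(\sigma q_k+p_k)t$ one gets $\lambda^2/2=2(\sigma q_k+p_k)^2t^2$, hence $E_k(T)=2T\bigl((\sigma q_k+p_k)^2T-(\theta\mu_k+\nu_k)\bigr)=\tfrac{T}{2}\bigl(4T(\sigma q_k+p_k)^2-4(\theta\mu_k+\nu_k)\bigr)$. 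But hypothesis \eqref{eq:2a-T} controls $T(\sigma q_k+p_k)^2-4(\theta\mu_k+\nu_k)$, not $4T(\sigma q_k+p_k)^2-4(\theta\mu_k+\nu_k)$, so it is \emph{not} ``precisely the hypothesis that bounds $E_k(T)$ by $T\bar{C}_T$'': under \eqref{eq:2a-T} your formula only yields $E_k(T)\le T\bar{C}_T+\tfrac{3}{2}T^2(\sigma q_k+p_k)^2$, and the extra term is unbounded in $k$ in cases allowed by \eqref{eq:2a-T} (take $(\sigma q_k+p_k)^2=4(\theta\mu_k+\nu_k)/T$ with $\theta\mu_k+\nu_k\to\infty$; then \eqref{eq:2a-T} holds with $\bar{C}_T=0$ while $E_k(T)=6T(\theta\mu_k+\nu_k)\to\infty$). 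As written, your proof of the second assertion does not close.

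The source of the discrepancy is that the paper's own proof computes $\bE u_k^2(t)=u_k^2(0)\exp\bigl(-2(\theta\mu_k+\nu_k)t+\tfrac12(\sigma q_k+p_k)^2t^2\bigr)$, i.e.\ with variance term $\tfrac12 a^2t^2$ rather than your $2a^2t^2$, and the constants in \eqref{eq:2a-T} are calibrated to that exponent: there the value at $t=T$ is exactly $\tfrac{T}{2}\bigl(T(\sigma q_k+p_k)^2-4(\theta\mu_k+\nu_k)\bigr)\le T\bar{C}_T$, and your parabola-with-maximum-at-the-endpoints argument (using $E_k(0)=0$ and $\bar{C}_T\ge0$) goes through verbatim. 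So you must either adopt the paper's normalization --- in which case you should flag that your MGF computation disagrees with it by a factor of $4$ --- or keep your computation and strengthen the hypothesis to $4T(\sigma q_k+p_k)^2-4(\theta\mu_k+\nu_k)\le 2\bar{C}_T$ (equivalently, adjust the constant in the conclusion). The step where you assert that the stated hypothesis ``is precisely'' the needed endpoint bound is the one genuine gap; the first assertion (where \eqref{eq:2a} is insensitive to the multiplicative constant) and the rest of the structure are sound and match the paper.
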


\begin{proof}
By \eqref{FK-mlt-g},
\begin{align}
\notag
\bE u_k^2(t)&=u_k^2(0)\exp\left(-2(\theta\mu_k+\nu_k)t+
\frac{(\sigma q_k+p_k)^2t^2}{2}\right)\\
\label{FK-mlt-g-p1}
&=u_k^2(0)\exp\left(-2t(\theta\mu_k+\nu_k)
\left(1-\frac{(\sigma q_k+p_k)^2t}{4(\theta\mu_k+\nu_k)}\right)\right)\\
\label{FK-mlt-g-p2}
&=u_k^2(0)\left(\frac{t}{2}\Big((\sigma q_k+p_k)^2t-4(\theta\mu_k+\nu_k)\Big)\right).
\end{align}

If \eqref{eq:2a} holds, then, for every $t > 0$,
there exists $k=k(t)$ such that, for all $k>k(t)$,
$$
1-\frac{(\sigma q_k+p_k)^2t}{4(\theta\mu_k+\nu_k)}>\frac{1}{2},
$$
and then  \eqref{FK-mlt-g-p1} implies
$$
\bE u_k^2(t)\leq u_k^2(0)\, e^{-C_*t},\ \ k>k(t),
$$
concluding the proof.

If \eqref{eq:2a-T} holds, then \eqref{FK-mlt-g-p2}  implies that, for all $t\in [0,T]$
and $k\geq 1$,
$$
\bE u_k^2(t)\leq u_k^2(0)\, e^{T\bar{C}_T},
$$
concluding the proof.
\end{proof}

In what follows, we assume, with no loss of generality, that $C_*=0$.

Define
$$
Y_k=\frac{1}{t}\ln \frac{u_k(t)}{u_k(0)}, \quad k=1,\dots,N.
$$
Then, for each $t>0$, the random variable $Y_k$ is Gaussian with mean $-(\theta\mu_k+\nu_k)$ and variance
$(\sigma q_k+p_k)^2$, and the random variables $Y_1,\ldots, Y_N$ are independent.

 We consider $\theta$ and
$\vartheta=\sigma^2$ as the two unknown parameters.
The corresponding likelihood function becomes
\begin{equation}
\label{eq:LF}
L_N(\theta,\vartheta)=
\exp\left(
-\frac{N}{2}\ln (2\pi)-\sum_{k=1}^N\ln(\sqrt{\vartheta}\, q_k+p_k)-
\frac{1}{2}\sum_{k=1}^N
\frac{(Y_k+\theta\mu_k+\nu_k)^2}{(\sqrt{\vartheta}\, q_k+p_k)^2}\right).
\end{equation}
Direct computations produce the Fisher information matrix
\begin{align}
\label{eq:FI}
I_N&=
\left(
\begin{array}{cc}
\Psi_N(\vartheta) & 0\\
0 & \Phi_N(\vartheta)
\end{array}
\right),\ \ \ {\rm where}\\
\label{PsiPhi}
\Psi_N(\vartheta)&=\sum_{k=1}^N\frac{\mu_k^2}{(\sqrt{\vartheta}\, q_k+p_k)^2},\ \
\Phi_N(\vartheta)=\frac{1}{2}\sum_{k=1}^N
\frac{q_k^2}{( \vartheta\,q_k+\sqrt{\vartheta}\,p_k)^2}.
\end{align}
Note that if $p_k=0$ for all $k$, then $\Phi_N(\vartheta)=N/(2\sigma^4)$.
More generally, if
$$
\lim_{k\to \infty} \frac{p_k}{q_k} = c_{pq}\in [0,+\infty),
$$
then $\Phi_N\sim N$.

\begin{proposition}
\label{prop:MLE}
If $\Theta=\bR\times(0,+\infty)$ and $p_k=0$ for all $k\geq 1$,
 then the joint maximum likelihood
estimator of $(\theta,\vartheta)$ is
\begin{equation}
\label{eq:MLE}
\hat{\theta}_N=-\frac{\sum_{k=1}^N(\mu_kY_k+\mu_k\nu_k)/q_k^2}
{\sum_{k=1}^N(\mu_k/q_k)^2},\ \
\hat{\vartheta}_N=\frac{1}{N}\sum_{k=1}^N
\frac{(Y_k+\hat{\theta}_N\mu_k+\nu_k)^2}{q_k^2}.
\end{equation}
\end{proposition}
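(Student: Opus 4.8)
The plan is to maximize the log-likelihood directly via its first-order (score) equations, exploiting the fact that when $p_k=0$ the parameter $\theta$ enters \eqref{eq:LF} only through a weighted sum of squares. First I would set $p_k=0$ in \eqref{eq:LF}, pass to the log-likelihood, and absorb the terms not depending on the parameters into a constant; using $\sum_{k=1}^N\ln(\sqrt{\vartheta}\,q_k)=\tfrac{N}{2}\ln\vartheta+\mathrm{const}$, this gives
\[
\ell_N(\theta,\vartheta)=-\frac{N}{2}\ln\vartheta-\frac{1}{2\vartheta}\sum_{k=1}^N\frac{(Y_k+\theta\mu_k+\nu_k)^2}{q_k^2}+\mathrm{const}.
\]

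The key structural observation is that, for each fixed $\vartheta>0$, maximizing $\ell_N$ in $\theta$ amounts to minimizing the weighted residual sum $\sum_{k=1}^N(Y_k+\theta\mu_k+\nu_k)^2/q_k^2$, a weighted least-squares problem whose solution does not depend on $\vartheta$. Hence I would solve $\partial_\theta\ell_N=0$, i.e. $\sum_{k=1}^N(Y_k+\theta\mu_k+\nu_k)\mu_k/q_k^2=0$, and read off the unique root $\hat\theta_N$ as in \eqref{eq:MLE}. Strict concavity in $\theta$, since $\partial_\theta^2\ell_N=-\vartheta^{-1}\sum_{k=1}^N(\mu_k/q_k)^2<0$ (the denominator in \eqref{eq:MLE} being a positive sum of squares under the standing assumption $\mu_k>0$), makes this the unique global maximizer in $\theta$ for every $\vartheta$.

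Next I would substitute $\hat\theta_N$ to form the profile log-likelihood in $\vartheta$ alone. With $S_N:=\sum_{k=1}^N(Y_k+\hat\theta_N\mu_k+\nu_k)^2/q_k^2$ this reduces to $-\tfrac{N}{2}\ln\vartheta-S_N/(2\vartheta)$ up to a constant, and setting its $\vartheta$-derivative $-N/(2\vartheta)+S_N/(2\vartheta^2)$ to zero gives $\hat\vartheta_N=S_N/N$, the second formula in \eqref{eq:MLE}. The sign of this derivative (positive for $\vartheta<S_N/N$, negative afterwards) identifies $\hat\vartheta_N$ as the unique maximizer on $(0,+\infty)$.

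The computation itself is routine; the only point needing care is confirming that the stationary point is a genuine \emph{joint} global maximum rather than a saddle. I would handle this by the profiling argument above (strict concavity in $\theta$ for each $\vartheta$, followed by unimodality of the profile in $\vartheta$), together with the boundary behavior $\ell_N\to-\infty$ as $\vartheta\to0^+$ or $\vartheta\to+\infty$ (valid since $S_N>0$ almost surely). Equivalently, one may verify directly that the Hessian is negative definite at the stationary point, consistent with the diagonal Fisher information matrix \eqref{eq:FI}.
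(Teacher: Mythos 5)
Your proof is correct and is precisely the ``direct computation'' the paper alludes to (the paper offers no written proof of Proposition \ref{prop:MLE} beyond that remark): setting $p_k=0$ in \eqref{eq:LF}, solving the score equation in $\theta$ as a weighted least-squares problem whose solution is free of $\vartheta$, and then profiling in $\vartheta$ yields exactly \eqref{eq:MLE}. Your additional care in verifying that the stationary point is a global joint maximizer (strict concavity in $\theta$, unimodality of the profile, boundary behavior as $\vartheta\to 0^+$ and $\vartheta\to+\infty$) is a welcome refinement of what the paper leaves implicit.
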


While \eqref{eq:MLE} follows by direct computation, a lot of extra work
is required to investigate the basic properties of the estimator, such as consistency and asymptotic normality, and it still will not be clear how the estimator
compares with other possible estimators, for example, Bayesian. Moreover,
when $p_k\not=0$, no closed-form expressions for $\hat{\theta}_N$ and
$\hat{\vartheta}_N$ can be found.

As a result,
 the main object of study becomes the  {\em local likelihood ratio}
\begin{equation}
\label{LocalLR}
Z_{N,\bth}(\bx)=\frac{L_N(\theta(s),\vartheta(\tau))}{L_N(\theta,\vartheta)},\ \
\bth=(\theta,\vartheta),\ \bx=(s,\tau),
\end{equation}
with
$$
 \theta(s)=\theta+\frac{s}{\sqrt{\Psi_N(\vartheta)}},\ \
 \vartheta(\tau)=\vartheta+\frac{\tau}{\sqrt{\Phi_N(\vartheta)}}.
$$
 Then various  properties of the maximum likelihood  and Bayesian estimators,
 including consistency, asymptotic normality, and optimality,
 can be established by analyzing the function $Z_{N,\bth}$; see \cite[Chapters I--III]{IbragimovKhasminskiiBook1981}.

 \begin{definition}
 	\label{def:reg}
 	The function $Z_{N,\bth}$ is called regular if the following conditions
 	are satisfied.
 	\begin{enumerate}
\item[R1.] For every compact set $K\subset\Theta$ and sequences $\bth_N=(\theta_N,\vartheta_N)$,
$\bth_N(\bx_N)=\big(\theta_N(s_N),\vartheta_N(\tau_N)\big)$  in $K$ with
$\lim_{N\to \infty}(s_N,\tau_N)=(s,\tau)$,   the representation
\begin{equation}
\label{eq:LAN}
Z_{N,\bth_N}(s_N,\tau_N)=\exp\left(
s\eta_N+\tau\zeta_N-\frac{s^2}{2}-\frac{\tau^2}{2}+\varepsilon_N(\bth_N,\bx_N)
\right),
\end{equation}
holds, so that,  as $N\to \infty$,  the random vector $(\eta_N,\zeta_N)$
converges in distribution to a standard bi-variate Gaussian vector and
the random variable $\varepsilon_N(\bth_N,\bx_N)$ converges in probability to zero.

\item[R2.] For every $\vartheta>0$,
\begin{equation}
\label{InfFI}
\lim_{N\to \infty}\Psi_N(\vartheta)=\lim_{N\to \infty}\Phi_N(\vartheta)=+\infty.
\end{equation}

\item[] To state the other  two conditions, define
$$
U_N(\bth)=\big\{(s,\tau)\in \bR^2:(\theta+s\Psi_N^{-1/2},\vartheta+\tau\Phi_N^{-1/2})\in
\Theta \big\}.
$$

\item[R3.] For every compact $K\subset \Theta$, there exist positive numbers $a$ and $B$ such that,
for all $N\geq 1$ and $R>0$,
\begin{equation}
\label{Holder}
\sup_{\bth\in K} \sup_{\scriptsize{\begin{array}{c}\bx\in U_N(\bth),\, \by\in U_N(\bth),\\ |\bx|<R,\,|\by|<R\end{array}}}|\bx-\by|^{-4}\
\bE\Big| Z^{1/8}_{N,\bth}(\bx)-Z^{1/8}_{N,\bth}(\by)
\Big|^8 \leq B(1+R^a).
\end{equation}

\item[R4.]  	For every compact set 	$K\subset \Theta$ and every $p>0$,
there exists an $N_0=N_0(K,p)$ such that
\begin{equation}
\label{PG}
\sup_{\bth\in K}\sup_{N>N_0} \sup_{\bx\in U_N(\bth)}|\bx|^p\
\bE Z^{1/2}_{N,\bth}(\bx)<\infty.
\end{equation} 		
 	\end{enumerate}
 \end{definition}

Conditions R1--R4 are  natural modifications of conditions
N1--N4 from \cite[Section III.1]{IbragimovKhasminskiiBook1981} to our setting. In particular,
 R1 is known as  uniform local asymptotic normality.  Note that, in R3, there is nothing special
about the numbers 4 and 8 except that
\begin{enumerate}
	\item  The smaller of the two numbers  should be bigger than the dimension of the
parameter space (cf. \cite[Theorem III.1.1]{IbragimovKhasminskiiBook1981});
\item In the setting \eqref{eq:LF}, \eqref{LocalLR}, the  larger
number should be at least twice as big as the smaller number, which
 is related to the square root function connecting
variance and standard deviation.
\end{enumerate}

The next result illustrates the importance of regularity.

\begin{theorem}
	\label{th:LAN}
	Assume that the function $Z_{N,\bth}$ is regular. Then
	\begin{enumerate}
		\item The joint MLE $ (\hat{\theta}_N,\hat{\vartheta}_N)$ of $(\theta,\vartheta)$
		 is consistent and  asymptotically
		normal with rate $I_N^{1/2}$, that is,
			as $N\to \infty$, $\big(\sqrt{\Psi_N(\vartheta)}(\hat{\theta}_N-\theta),
			\sqrt{\Phi_N(\vartheta)}(\hat{\vartheta}_N-\vartheta)\big)$
converges in 	
			distribution to a standard bivariate Gaussian random vector.
The estimator
		is asymptotically efficient with respect to
		loss functions of polynomial growth and,
with $\eta_N$ and $\zeta_N$ from
		\eqref{eq:LAN},
		$$
		\lim_{N\to \infty} \Big(\sqrt{\Psi_N(\vartheta)}\,(\hat{\theta}_N-\theta)-\eta_N\Big)=0,\ \
	\lim_{N\to \infty}	 \Big(\sqrt{\Phi_N(\vartheta)}\,(\hat{\vartheta}_N-\vartheta)-\zeta_N\Big)=0,
		$$
		in probability.
		\item Every Bayesian estimator $ (\tilde{\theta}_N,\tilde{\vartheta}_N)$
		 corresponding to an absolutely continuous prior on
		$\Theta$ and a loss function of polynomial growth is   consistent, asymptotically
		normal with rate $I_N^{1/2}$,  asymptotically efficient with respect to
		loss functions of polynomial growth, and
	$$
	\lim_{N\to \infty} \sqrt{\Psi_N(\vartheta)}\,\big(\hat{\theta}_N-\tilde{\theta}_N\big)=0,\ \
	\lim_{N\to \infty}	 \sqrt{\Phi_N(\vartheta)}\,\big(\hat{\vartheta}_N-\tilde{\vartheta}_N\big)=0
	$$	
in probability.
\end{enumerate}		
	
\end{theorem}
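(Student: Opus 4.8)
The plan is to treat Theorem~\ref{th:LAN} as an application of the general estimation theory for families of likelihood ratios developed in \cite[Chapters~I--III]{IbragimovKhasminskiiBook1981}, since conditions R1--R4 were designed precisely to match the hypotheses of that theory. First I would introduce the normalizing matrix $\varphi_N=I_N^{-1/2}=\mathrm{diag}\big(\Psi_N^{-1/2}(\vartheta),\Phi_N^{-1/2}(\vartheta)\big)$ and observe that, because $I_N$ is diagonal by \eqref{eq:FI}, the reparametrization in \eqref{LocalLR} is exactly $\bth\mapsto\bth+\varphi_N\bx$. Under this identification $Z_{N,\bth}(\bx)$ is the normalized likelihood ratio of the model, and the task reduces to checking that R1--R4 are, term by term, the conditions needed to invoke \cite[Theorem~III.1.1]{IbragimovKhasminskiiBook1981}.

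The second step is to trace the role of each condition in that theorem. Condition R2 makes the local parameter regions $U_N(\bth)$ exhaust $\bR^2$, so the rescaled experiment is asymptotically defined on the whole plane; R3 is the H\"older-type estimate that gives tightness of the random fields $Z_{N,\bth}(\cdot)$ and continuity of their limit; and R4 is the tail bound that confines the maximizer of $Z_{N,\bth}$ (and the Bayes center of mass) to a compact set with high probability while also yielding convergence of polynomial moments. Condition R1 (uniform LAN) identifies the limit: along the admissible sequences, $Z_{N,\bth_N}(s_N,\tau_N)$ converges to $\exp\big(s\eta+\tau\zeta-\tfrac12 s^2-\tfrac12\tau^2\big)$, the likelihood ratio of a standard Gaussian shift experiment, where $(\eta,\zeta)$ is the distributional limit of $(\eta_N,\zeta_N)$.

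With these inputs, \cite[Theorem~III.1.1]{IbragimovKhasminskiiBook1981} yields at once that both the MLE and every Bayesian estimator with polynomial loss are consistent, that their $I_N^{1/2}$-normalized errors converge in distribution to the argmax, respectively the Bayes point, of the limiting field, and that the associated moments converge. For the Gaussian shift limit the argmax is exactly $(\eta,\zeta)$, giving the asymptotic normality of $\big(\sqrt{\Psi_N}(\hat\theta_N-\theta),\sqrt{\Phi_N}(\hat\vartheta_N-\vartheta)\big)$ toward a standard bivariate Gaussian vector, with independent components because $I_N$ is diagonal. The sharper in-probability statements come from the same circle of ideas: the normalized MLE error is asymptotically equal to the linear term of the LAN expansion \eqref{eq:LAN}, and matching that term with $(\eta_N,\zeta_N)$ produces $\sqrt{\Psi_N}(\hat\theta_N-\theta)-\eta_N\to0$ and $\sqrt{\Phi_N}(\hat\vartheta_N-\vartheta)-\zeta_N\to0$ in probability. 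For the Bayesian estimator, the posterior concentrates and is asymptotically Gaussian regardless of the absolutely continuous prior, so its Bayes point coincides asymptotically with the maximizer; this gives the asymptotic equivalence $\sqrt{\Psi_N}(\hat\theta_N-\tilde\theta_N)\to0$, $\sqrt{\Phi_N}(\hat\vartheta_N-\tilde\vartheta_N)\to0$ in probability, together with the asymptotic efficiency of both estimators, which is the minimax bound attained in the limiting Gaussian experiment.

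Because R1--R4 are assumed, the difficulty is not in producing estimates but in the careful translation between the abstract and the concrete formulations. The hard part will be confirming that the uniformity over compacts $K$ and over the sequences $\bth_N$ built into R1 and R3 is exactly what the Ibragimov--Khasminskii theorems require in order to upgrade fixed-parameter convergence in distribution into uniform consistency, convergence of moments, and the asymptotic minimax statements, and in pinning down that the abstract argmax and Bayes-point limits are the specific shift variables $(\eta_N,\zeta_N)$ of \eqref{eq:LAN}. The diagonal structure of $I_N$ is what makes the two coordinates decouple, so that the bivariate limit is standard Gaussian with independent entries.
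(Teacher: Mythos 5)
Your proposal is correct and follows essentially the same route as the paper: the paper's proof consists of citing \cite[Sections III.1 and III.2]{IbragimovKhasminskiiBook1981} for the MLE and Bayesian estimators respectively, exactly the reduction you carry out (your more detailed tracing of how R1--R4 map onto the hypotheses N1--N4 is a faithful elaboration of what the paper leaves implicit). The only cosmetic difference is that the Bayesian part rests on the results of Section III.2 rather than Theorem III.1.1 alone.
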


\begin{proof}
	The MLE is covered by the results of \cite[Section III.1]{IbragimovKhasminskiiBook1981}.
	The Bayesian estimators are covered by the results of \cite[Section III.2]{IbragimovKhasminskiiBook1981}.
	\end{proof}

Accordingly, our objective is to determine the conditions on the sequences
$\mu_k,\nu_k,q_k,p_k$ so that the function $Z_{N,\bth}$ defined by
\eqref{LocalLR} is regular.

\begin{theorem}
\label{th:main}
	Assume that
	\begin{align}
	\label{reg-c1}
	\sum_{k=1}^{\infty}\frac{\mu_k^2}{(q_k+p_k)^2} = +\infty,\\
	\label{reg-c2}
	\sum_{k=1}^{\infty}\frac{q_k^2}{(q_k+p_k)^2} = +\infty.
	\end{align}
	
 Then the  function $Z_{N,\bth}$ is regular.
 \end{theorem}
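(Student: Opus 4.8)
The plan is to verify the four regularity conditions R1--R4 of Definition~\ref{def:reg} by exploiting the explicit structure of the model. Under the true parameter $\bth=(\theta,\vartheta)$ the normalized residuals
$$
W_k=\frac{Y_k+\theta\mu_k+\nu_k}{\sigma_k},\qquad \sigma_k=\sigma_k(\vartheta)=\sqrt{\vartheta}\,q_k+p_k,
$$
are i.i.d.\ standard normal, and $\ln Z_{N,\bth}$ is a sum of $N$ independent contributions. Two structural facts drive everything. First, the Fisher information is diagonal and the log-likelihood is \emph{exactly quadratic} in the mean parameter $\theta$ (all $\theta$-derivatives of order $\ge 3$ vanish), so the $\theta$-direction is automatically Gaussian and only the variance direction requires a genuine expansion. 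Second, one has the uniform bound $q_k/\sigma_k(\vartheta)=q_k/(\sqrt{\vartheta}\,q_k+p_k)\le \vartheta^{-1/2}$, which is the single estimate that makes every negligibility step below work uniformly on compacts of $\Theta$. I would dispose of R2 immediately: for fixed $\vartheta>0$ the ratio $(q_k+p_k)/(\sqrt{\vartheta}\,q_k+p_k)$ lies between $\min(1,\vartheta^{-1/2})$ and $\max(1,\vartheta^{-1/2})$, so $\Psi_N(\vartheta)\sim\sum_{k\le N}\mu_k^2/(q_k+p_k)^2$, and since $(\vartheta q_k+\sqrt{\vartheta}\,p_k)^2=\vartheta\,\sigma_k^2$ one gets $\Phi_N(\vartheta)=\tfrac{1}{2\vartheta}\sum_{k\le N}q_k^2/\sigma_k^2\sim\sum_{k\le N}q_k^2/(q_k+p_k)^2$; thus \eqref{reg-c1}--\eqref{reg-c2} force $\Psi_N,\Phi_N\to\infty$ for every $\vartheta$, which is \eqref{InfFI}.

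For R1 (uniform LAN) I would expand $\ln Z_{N,\bth_N}(s_N,\tau_N)$ termwise in the local coordinates, using the exact quadratic dependence on $\theta$ and a second-order expansion in $\vartheta$. The first-order terms assemble into $s_N\eta_N+\tau_N\zeta_N$ with
$$
\eta_N=-\frac{1}{\sqrt{\Psi_N}}\sum_{k=1}^N\frac{\mu_k}{\sigma_k}W_k,\qquad
\zeta_N=\frac{1}{2\sqrt{\vartheta}\,\sqrt{\Phi_N}}\sum_{k=1}^N\frac{q_k}{\sigma_k}\,(W_k^2-1).
$$
Here $\eta_N$ is \emph{exactly} $N(0,1)$ for every $N$, while $\zeta_N\Rightarrow N(0,1)$ by the Lyapunov criterion: with weights $c_k=q_k/(2\sqrt\vartheta\sqrt{\Phi_N}\,\sigma_k)$ one has $\sum_k c_k^4=O(\Phi_N^{-1})\to0$, using $q_k/\sigma_k\le\vartheta^{-1/2}$. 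Since $\bE[W_k(W_k^2-1)]=0$, the two are uncorrelated, and Cram\'er--Wold gives joint convergence to a standard bivariate Gaussian. For the quadratic term, $\partial^2_\theta\ell_N\equiv-\Psi_N$ is deterministic (producing the exact $-s^2/2$), the normalized cross-Hessian has variance $\le(\vartheta^2\Phi_N)^{-1}\to0$, and the normalized $\vartheta$-Hessian converges to $1$ in probability (mean $-\Phi_N$, variance $o(\Phi_N^2)$, again via $q_k/\sigma_k\le\vartheta^{-1/2}$); together these yield $-\tfrac12(s^2+\tau^2)$, while the third-order remainder in $\vartheta$ is $O(\Phi_N^{-1/2})$ and collects into $\varepsilon_N\to0$. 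All bounds are uniform for $\bth_N\in K$ because $K$ is compact and bounded away from $\vartheta=0$.

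The main work is R3 and R4, which I would handle through the explicit Hellinger affinity of products of Gaussians. Writing $\sigma_k'=\sigma_k(\vartheta(\tau))$ and $m_k-m_k'=\mu_k s\,\Psi_N^{-1/2}$,
$$
\bE\,Z_{N,\bth}^{1/2}(\bx)=\prod_{k=1}^N\Big(\tfrac{2\sigma_k\sigma_k'}{\sigma_k^2+\sigma_k'^2}\Big)^{1/2}
\exp\!\Big(-\tfrac{\mu_k^2 s^2/\Psi_N}{4(\sigma_k^2+\sigma_k'^2)}\Big).
$$
For bounded $\tau$ one has $\sigma_k'\asymp\sigma_k$ uniformly, so the mean part contributes $\exp(-c\,s^2)$ and the variance logarithms contribute $\exp(-c\,\tau^2)$; for large $|\bx|$ the constraint $\bx\in U_N(\bth)$ together with the convexity of $x\mapsto-\ln\frac{2x}{1+x^2}$ supplies a matching exponential decay, giving a uniform bound $\bE\,Z^{1/2}_{N,\bth}(\bx)\le C\exp(-c|\bx|^2)$ that dominates any $|\bx|^p$, hence R4. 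For R3 I would use $|Z^{1/8}(\bx)-Z^{1/8}(\by)|\le|\bx-\by|\int_0^1|\nabla Z^{1/8}|\,dt$, raise to the eighth power, apply Jensen, and reduce R3 to a polynomial-in-$R$ bound on $\sup_{|\bz|\le R}\bE\big[Z(\bz)\,|\nabla\ln Z(\bz)|^8\big]$; the change of measure $\bE[Z(\bz)h]=\bE_{\bth(\bz)}[h]$ turns this into an eighth-moment estimate on the locally normalized score under the shifted law, which is finite and polynomially growing because the $\theta$-score is Gaussian and the $\vartheta$-score is a bounded-weight sum of $(W_k^2-1)$.

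The hard part is the \emph{uniform} control in R3--R4 over a compact $K\subset\Theta$ and over all $N$, specifically in the variance direction: the local scaling $\Phi_N^{-1/2}$ interacts with the nonlinearity of $\sigma_k(\vartheta)$ in $\vartheta$, and one must prevent the factors $\frac{2\sigma_k\sigma_k'}{\sigma_k^2+\sigma_k'^2}$ from degenerating as $|\tau|$ grows. The estimate $q_k/\sigma_k\le\vartheta^{-1/2}$ is precisely what keeps these factors uniformly non-degenerate and makes the exponential decay in $|\bx|$ uniform; producing that decay with explicit constants valid for all $N$ is the principal obstacle.
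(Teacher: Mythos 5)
Your overall architecture matches the paper's proof: verify R1--R4 directly, with an explicit Hellinger-type computation of $\bE Z_{N,\bth}^{1/2}$ for R4 and a gradient/mean-value argument for R3. Two of your technical choices are genuinely different and both legitimate: for the joint CLT in R1 you use Lyapunov plus Cram\'er--Wold applied to the triangular array of independent summands, where the paper instead places $\eta_N$ in the first and $\zeta_N$ in the second Wiener chaos and invokes the fourth-moment theorem of Nourdin--Peccati--Swan (which has the advantage of giving the convergence \emph{uniformly} in $(\theta_N,\vartheta_N)$ essentially for free, something your "all bounds are uniform on $K$" remark asserts but does not quantify); and in R3 you replace the paper's H\"older-inequality splitting of $\bE|H_N G_N|^8$ by the change of measure $\bE[Z(\bz)h]=\bE_{\bth(\bz)}[h]$, which is a clean equivalent.

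However, there are two concrete gaps. First, in R4 your claimed uniform bound $\bE Z^{1/2}_{N,\bth}(\bx)\le C\exp(-c|\bx|^2)$ is false: with $p_k=0$ the variance part of the affinity equals $\bigl(2\sqrt{\vartheta\vartheta(\tau)}/(\vartheta+\vartheta(\tau))\bigr)^{N/2}$, which for fixed $N$ decays only like $|\tau|^{-N/4}$ as $\tau\to+\infty$. This is exactly why R4 is formulated with an $N_0$ depending on $p$; the paper's argument locates the maximizer of $F_N(\tau)=|\tau|^q\prod_k(\cdot)^{1/2}$ and shows the maximum stays bounded for $N$ large, rather than asserting Gaussian decay. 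Your conclusion survives, but only after replacing the exponential bound by the correct polynomial one and choosing $N_0(K,p)$ accordingly. Second, the issue you flag at the end as "the principal obstacle" --- uniform control in R3 when $\vartheta+\tau\Phi_N^{-1/2}$ can approach zero, i.e.\ when $N$ is small relative to $R$ --- is not an obstacle you may leave open: it is the reason R3 pairs the exponent $4$ on $|\bx-\by|$ with the exponent $8$ on the increment. The paper closes it by observing that $\tau\mapsto w_k(\tau)=\sqrt{\vartheta+\tau\Phi_N^{-1/2}}\,q_k+p_k$ is H\"older continuous of order $1/2$ up to the boundary of $U_N(\bth)$, so that $|Z^{1/8}(\bx)-Z^{1/8}(\by)|^8\lesssim|\bx-\by|^4$ for each fixed $N$, and then restricts to $N$ large enough that $\vartheta(\tau)$ is uniformly bounded away from zero, where your mean-value argument applies. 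Your proof as written, which differentiates in $\tau$ and raises to the eighth power, produces an eighth moment of the $\vartheta$-score proportional to $\vartheta(\tau)^{-8}$ near that boundary and therefore does not yield the required bound $B(1+R^a)$ uniformly in $N\ge1$.
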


 \begin{proof}
 To verify condition R1 from Definition \ref{def:reg}, write
 $$
 w_{k,N}=\sqrt{\vartheta_N}\,q_k+p_k,\ \ \ \xi_{k,N}
 =\frac{Y_k+\theta_N \, \mu_k+\nu_k}{w_{k,N}}.
 $$
 Direct computations show that \eqref{eq:LAN} holds with
 \begin{equation}
 	\label{RVMain}
 \eta_N=-\frac{1}{\sqrt{\Psi_N(\vartheta_N)}}\sum_{k=1}^N \frac{\mu_k \xi_{k,N}}{w_{k,N}},\ \ \
 \zeta_N=\frac{1}{\sqrt{2\vartheta_N\,\Phi_N(\vartheta_N)}}
 \sum_{k=1}^N\frac{q_k}{w_{k,N}}\, \frac{\xi_{k,N}^2-1}{\sqrt{2}},
 \end{equation}
  and $\varepsilon_N(\bth_N,\bx_N)$ is a sum of
  $$
  \varrho_N=\frac{1}{2\Phi_N(\vartheta_N)}
  \sum_{k=1}^N \frac{\xi^2_{k,N} q_k^2}{\vartheta_N\, w_{k,N}^2} \ - \ 1
  $$
  and  several  remainder terms coming from various
    Taylor expansions. By \eqref{reg-c2},
   $\lim_{N\to \infty} \varrho_N=0$ with probability one, uniformly on
   compact subsets of $\Theta$; cf. \cite[Theorem IV.3.2]{ShiryaevBookProbability}. Convergence to zero of the
   remainder terms is routine.

Next, let  $\mathcal{H}_q$ be the $q$-the homogeneous chaos space generated by
$\{\xi_{k,N},\ k=1,\ldots, N\}$. Then equalities \eqref{RVMain} imply
$\eta_N\in \mathcal{H}_1$,
$\zeta_N\in \mathcal{H}_2$, $\bE\eta_N^2=\bE\zeta_N^2=1$,
 $\bE(\eta_N\zeta_N)=0$, and
 $$
 \lim_{N\to \infty} \bE \big(\eta_N^2+\zeta_N^2\big)^2=8,
 $$
 uniformly in $(\theta_N,\vartheta_N)$.
By  \cite[Theorem 1.1]{NourdinPeccatiSwan2014}, it follows that $(\eta_N,\zeta_N)$
converges in distribution to a standard bi-variate Gaussian vector
   and  the convergence is uniform in $(\theta_N,\vartheta_N)$. Condition R1 is now verified.

   Assumptions \eqref{reg-c1} and \eqref{reg-c2} imply R2.

  To simplify the rest of the proof,   define
 \begin{align*}
 w_{k}&=\sqrt{\vartheta}\,q_k+p_k,\ \ \
 w_{k}(\tau)=\sqrt{\vartheta(\tau)}\,q_k+p_k,\ \ \
 \xi_{k}
 =\frac{Y_k+\theta \, \mu_k+\nu_k}{w_{k}},\\
 a_k&=\frac{1}{2}\left(1-\frac{w_k^2}{w_k^2(\tau)}\right),\ \ \
 b_k=\frac{w_k\mu_k}{\sqrt{\Psi_N}\,w_k^2(\tau)}\,s,
 \end{align*}
so that
\begin{equation}
\label{ZN-proof}
Z_{N,\bth}(\bx)=\prod_{k=1}^N\left(\frac{w_k}{w_k(\tau)}\right)
\exp\left(-\frac{s^2}{2\Psi_N}\sum_{k=1}^N \frac{\mu_k^2}{w_k^2(\tau)}\right)
\exp\left(\sum_{k=1}^N\big( a_k\xi_k^2-b_k\xi_k\big)\right).
\end{equation}

To verify R3, let
$$
G_N(\bx)=Z^{1/8}_{N,\bth}(\bx).
$$
This is  a smooth function of $s$ and $w_k(\tau)$, whereas each function
 $\tau\mapsto w_k(\tau)$ is H\"{o}lder continuous of order $1/2$:
 if $N$ is small compared to $R$, then $\vartheta+\tau/\sqrt{\Phi_N}$ can
 be arbitrarily close to zero.  By the chain rule, we conclude that
 R3 hods for every fixed $N$.
It remains to verify R3 uniformly in $N$ for every fixed $K$ and $R$, and
therefore we will assume from
now on that $N$ is sufficiently large, and, in particular, $\vartheta+\tau/\sqrt{\Phi_N}$ is uniformly bounded away from zero.

By the mean value theorem,
$$
|G_N(\bx)-G_N(\by)|\leq R^{1/2}\sqrt{|\bx-\by |}\ |\nabla G_N(\bx^*)|
$$
and  $\nabla G_N(\bx)=H_N(\bx)G_N(\bx)$,
where the two-dimensional random vector
$H_N$ satisfies $\sup_N\bE|H_N|^q<\infty$ for every $q>0$.
By the H\"{o}lder inequality,
$$
\bE\big|H_N(\bx)G_N(\bx)\big|^8
\leq \Big(\bE\big|H_N|^{8(1+\varepsilon)/\varepsilon}\Big)^{\varepsilon/(1+\varepsilon)}
\Big(\bE Z_{N,\bth}^{1+\varepsilon}(\bx)\Big)^{1/(1+\varepsilon)},\
\varepsilon>0.
$$
It follows from \eqref{ZN-proof} that, for every $K$ and $R$, there is an $\varepsilon>0$ such that
$$
\sup_{\bth\in K}\sup_{\bx\in U_N(\bth),\,|\bx|<R}
\bE Z_{N,\bth}^{1+\varepsilon}(\bx)
 < B(K,R,\varepsilon),\ N>N_0(\varepsilon).
$$
Condition R3 is now verified.

To verify R4, note that, for a  standard Gaussian random variable $\xi$,
$$
\bE e^{a\xi^2-b\xi}=e^{-b^2/(4a)}\bE e^{a(\xi-(b/2a))^2}=
(1-2a)^{-1/2} e^{b^2/(2-4a)};
$$
cf. \cite[Proposition 6.2.31]{LototskyRozovsky2017Book}.
Then
$$
\bE Z_{N,\bth}^{1/2}(\bx)=\left[
\prod_{k=1}^N\left(\frac{2w_kw_k(\tau)}{w_k^2+w^2_k(\tau)}\right)^{1/2}
\right]
\exp\left(-\frac{s^2}{4\Psi_N}\sum_{k=1}^N
\frac{\mu_k^2}{w_k^2+w^2_k(\tau)}\right).
$$
To study $\big(|s|^2+|\tau|^2\big)^{p/2}\bE Z_{N,\bth}^{1/2}(\bx)$,
denote by  $C$  a number that does not depend on $N$ and $\bx=( s, \tau)$; the value of
$C$ can be different in different places.
For $p>0, \ r>0$,
$$
|s|^pe^{-rs^2}\leq \left(\frac{p}{2r}\right)^{p/2},
$$
so that
\begin{align*}
|s|^p \exp\left(-\frac{s^2}{4\Psi_N}\sum_{k=1}^N
\frac{\mu_k^2}{w_k^2+w^2_k(\tau)}\right)&\leq C
\left(\frac{\Psi_N}{\sum_{k=1}^N \mu_k^2/(w_k^2+w_k^2(\tau))}\right)^{p/2}\\
&\leq C\big(\max(1,\tau)\big)^{p/2};
\end{align*}
the last inequality follows from the definitions of $\Psi_N$ and $w_k^2(\tau)$.
Writing
$$
F_N(\tau)=|\tau|^{q}
\prod_{k=1}^N\left(\frac{2w_kw_k(\tau)}{w_k^2+w^2_k(\tau)}\right)^{1/2},
$$
the objective becomes to show that, for fixed $q>0$ and all sufficiently large $N$,
$$
\max_{\tau>-\vartheta\sqrt{\Phi_N(\vartheta)}}F_N(\tau)<\infty,
$$
which, in turn,  follows by noticing that
$$
\arg\max_{\tau>-\vartheta\sqrt{\Phi_N(\vartheta)}}
 F_N(\tau)=2\sqrt{q}+O\Big(\Phi^{-1/2}_N(\vartheta)\Big),\
N\to \infty,
$$
and
$$
\lim_{N\to \infty}F_N(2\sqrt{q})=(4q)^{q/2\,}e^{-q/2}.
$$
Condition R4 is now verified, and Theorem \ref{th:main} is proved.
  \end{proof}

Taking $\mu_k=q_k=1,\ \nu_k=p_k=0$, we recover the familiar
 problem of  joint estimation of  mean and variance in a normal population.
Because the Fisher information matrix is diagonal, violation of
one of the conditions of the
theorem still leads to a regular statistical model for the other parameter.
 For example, if \eqref{reg-c1} holds but \eqref{reg-c2} does not,
 then $\sigma$ is not identifiable, but $\theta$ is, and the local likelihood ratio
$Z_{N,\theta}(s)=Z_{N,\bth}(s,0)$  is regular, as a function of one variable.

 Conditions \eqref{eq:2a} and  \eqref{reg-c1} serve different purposes:
 \eqref{eq:2a} ensures that \eqref{eq5g} has a global-in-time solution in $H$,
  whereas \eqref{reg-c1} implies regularity of the
   estimation problem for $\theta$ based on the observations
   (multi-channel model) $u_k$, $k=1,\ldots,N,\
   N\to \infty.$
  In general, \eqref{eq:2a} and  \eqref{reg-c1} are  not related:
 with  $\theta=1,\ \mu_k=1,\ \nu_k=k^4,\ q_k=k,\ p_k=0$,
 condition \eqref{eq:2a} holds, but \eqref{reg-c1} does not;
 taking $\theta=1,\ \mu_k=k^2,\ q_k=k^{3/2},\ \nu_k=p_k=0,$ we satisfy
 \eqref{reg-c1} but not \eqref{eq:2a} [and not even \eqref{eq:2a-T}], and the
 resulting multi-channel model, while regular in statistical sense,
  does not correspond to any stochastic evolution equation.

  Condition \eqref{reg-c2} means that the numbers $p_k$ are not too big compared to
  $q_k$; for example,
  \begin{equation}
  \label{pq-sufficient}
  \limsup_{k\to \infty}\frac{p_k}{\sqrt{k}\, q_k} < +\infty
  \end{equation}
  is sufficient for \eqref{reg-c2} to hold.

 By a theorem of Kakutani \cite{Kakutani1948},
 \eqref{reg-c1} is equivalent to singularity of the measures
 \begin{equation}
 \label{prod-meas}
 \prod_{k\geq 1} \mathcal{N}\big(-(\theta\mu_k+\nu_k),(\sigma q_k+p_k)^2\big)
 \end{equation}
 on $\big(\bR^{\infty},\mathcal{B}(\bR^{\infty})\big)$
 for different values of $\theta$, and  \eqref{reg-c2} is equivalent to singularity of
 the measures \eqref{prod-meas}
 on $(\bR^{\infty},\mathcal{B}(\bR^{\infty}))$
 for different values of $\sigma$. In other words, the conditions of
 Theorem \ref{th:LAN} are in line with the general statistical paradigm that a
 consistent estimation of a parameter is  possible when, in the suitable limit,
 the measures corresponding to different values of the parameter are singular.

A similar shell model, but with space-time noise, is considered in
\cite{ZCG2018}, where the observations are
\begin{equation}
\label{shell-st}
du_k(t)+(\theta\mu_k+\nu_k)u_k(t)dt=\sigma q_ku_k(t)\,dw_k(t),\ t\in [0,T],
\end{equation}
and  $w_k=w_k(t)$ are  i.i.d. standard Brownian motions. Continuous in time observations
make it possible to determine  $\sigma q_k$ exactly from the
 quadratic variation process of $u_k$, so, with no loss of generality, we set
 $\sigma=1$. Conditions  \eqref{eq:2a} and  \eqref{reg-c1} become,
 respectively,
 \begin{equation}
 \label{eq:2a-st}
 \sup_{k\geq1} \left(\frac{q_k^2}{2}-(\theta\mu_k+\nu_k)\right)<+\infty
 \end{equation}
 and
 \begin{equation}
 \label{reg-c1-st}
 \sum_{k\geq 1} \frac{\mu_k^2}{q_k^2}=+\infty.
 \end{equation}

An earlier paper  \cite{HuebnerRozovskii1995} studies
\begin{equation}
\label{shell-st-1}
du_k(t)+(\theta\mu_k+\nu_k)u_k(t)dt= q_k\,dw_k(t),\ t\in [0,T];
\end{equation}
 now, assuming $u_k(0)=0$,
 conditions  \eqref{eq:2a} and  \eqref{reg-c1} become,
 respectively,
 \begin{equation}
 \label{eq:2a-add}
 \sum_{k\geq 1} \frac{q_k^2}{\theta\mu_k+\nu_k}<\infty
 \end{equation}
 and
 \begin{equation}
 \label{reg-c1-add}
 \sum_{k\geq 1} \frac{\mu_k^2}{(\theta\mu_k+\nu_k)^2}=+\infty.
 \end{equation}
Similar to \cite{HuebnerRozovskii1995}, set $q_k=1$ (and, in \eqref{eq5g}, also $p_k=0$),
 and assume that
the operators $\theta A+A_0$ and $A$ from \eqref{eq:oper}
 are self-adjoint elliptic of
orders $2m$ and $m_1$ respectively, in
a smooth bounded domain in $\bR^d$. It is known  \cite{Shubin} that, as $k\to +\infty$,
$$
\theta\mu_k+\nu_k\sim k^{2m/d},\ \mu_k\sim k^{m_1/d},
$$
and so
\begin{itemize}
\item conditions \eqref{eq:2a}, \eqref{reg-c1}, \eqref{eq:2a-st}, and
\eqref{reg-c1-st} always hold;
\item condition \eqref{eq:2a-add} holds if $2m>d$;
\item condition \eqref{reg-c1-add} holds if $2(m_1-m)+d\geq 0$.
\end{itemize}

More generally,  if the sequences $\{q_k,\ k\geq 1\}$ and $\{p_k,\ k\geq 1 \}$
are bounded, then \eqref{eq:2a-add} implies \eqref{eq:2a}, and \eqref{eq:2a} implies
\eqref{eq:2a-st}; whereas \eqref{reg-c1} and \eqref{reg-c1-st} are equivalent and both follow
 from \eqref{reg-c1-add}.  In other words, the space-time shell model \eqref{shell-st} admits
a global-in-time solution in $H$ and leads to a regular statistical model
under the least restrictive  conditions, and the model with additive noise
\eqref{shell-st-1} requires the most restrictive conditions.

\section{Additive Noise}\label{sec:Additive}
In this section we study the parameter estimation problem for \eqref{eq4}, driven by a space-only additive noise. We consider two observation schemes, starting with the assumption that the observations occur in the Fourier domain (similarly to shell model). Under the second observation scheme, exploring the special structure of the equation, we assume that the observer measures the derivative of the solution  in the physical space, at one fixed time point and over a uniform space grid.

Existence, uniqueness, and continuous dependence on the initial condition for equation
\eqref{eq4}  follow directly from \eqref{FK-add}.

\begin{theorem}	\label{th1}
	If $u(0)\in H$ and
	\begin{equation}\label{cond-add}
	\sum_{k=1}^{\infty}\frac{q_k^2}{\mu_k^2}<\infty,
	\end{equation}	
	then the solution of \eqref{eq4} satisfies 	$u(t)\in L_2(\Omega;H)$ for every $t>0$, and
	$$
	\bE \|u(t)\|_H^2\leq \|u(0)\|_H^2+\frac{\sigma^2}{\theta^2}
	\sum_{k=1}^{\infty}\frac{q_k^2}{\mu_k^2}.
	$$
\end{theorem}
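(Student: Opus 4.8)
The plan is to exploit the fact that $\{h_k\}_{k\in\bN}$ is a complete orthonormal system in $H$, so that Parseval's identity gives $\|u(t)\|_H^2=\sum_{k=1}^{\infty}u_k^2(t)$ pathwise. Taking expectations and interchanging the sum with $\bE$---which is legitimate by the monotone convergence theorem since every summand is nonnegative---reduces the problem to estimating $\bE u_k^2(t)$ term by term.

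For each fixed $k$ I would compute $\bE u_k^2(t)$ directly from the closed-form expression \eqref{FK-add}. Since $u_k(0)$ is deterministic and $\xi_k$ is standard normal with $\bE\xi_k=0$ and $\bE\xi_k^2=1$, squaring and taking expectation annihilates the cross term and yields
$$
\bE u_k^2(t)=u_k^2(0)\,e^{-2\theta\mu_k t}+\frac{\sigma^2 q_k^2}{\theta^2\mu_k^2}\left(1-e^{-\theta\mu_k t}\right)^2.
$$

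Next I would bound each factor crudely. Because $\theta>0$, $\mu_k>0$, and $t>0$, both $e^{-2\theta\mu_k t}\leq 1$ and $\left(1-e^{-\theta\mu_k t}\right)^2\leq 1$, so
$$
\bE u_k^2(t)\leq u_k^2(0)+\frac{\sigma^2 q_k^2}{\theta^2\mu_k^2}.
$$
Summing over $k$ and using $\sum_{k}u_k^2(0)=\|u(0)\|_H^2<\infty$ together with hypothesis \eqref{cond-add} gives the stated bound and, in particular, shows that the right-hand side is finite, hence $u(t)\in L_2(\Omega;H)$.

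I do not anticipate a substantive obstacle: the argument is a direct computation once the explicit modal representation \eqref{FK-add} is in hand. The only point requiring a word of care is the interchange of summation and expectation, which is justified purely by nonnegativity (Tonelli), so that no uniform integrability or dominated-convergence hypothesis is needed; the convergence of the resulting series is then exactly what \eqref{cond-add} and $u(0)\in H$ guarantee.
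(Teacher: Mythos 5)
Your proof is correct and is essentially the argument the paper has in mind: the paper gives no separate proof of Theorem~\ref{th1}, stating only that it ``follows directly from \eqref{FK-add}'', and your computation (Parseval, vanishing cross term since $\bE\xi_k=0$, the crude bounds $e^{-2\theta\mu_k t}\leq 1$ and $(1-e^{-\theta\mu_k t})^2\leq 1$, then Tonelli to sum) is exactly that direct verification.
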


\subsection{Observations in Fourier Domain}\label{sec:AdditiveFourierDomain}
Consider equation \eqref{eq4}. Define
$$
U_k(t)=u_k(t)-u_k(0),\ S_k(t)=1-e^{-\theta\mu_kt},\ \
F_{a,b}(x)=\frac{1-e^{-ax}}{1-e^{-bx}},\ a>b>0.
$$
The function $x\mapsto F_{a,b}(x)$ is decreasing on $(0,+\infty)$. Indeed, note that for any $p>1$, the function
$$
y\mapsto \frac{1-y^p}{1-y}
$$
is increasing  on $(0,1)$, and hence, by taking $y=e^{-bx},\ p=a/b$, the monotonicity of $F_{a,b}(\,\cdot\,)$ follows at once.

\begin{theorem}\label{th-t2}
	For every $t_2>t_1>0$ and every $k=1,2,\ldots,$
	$$
	\theta \mu_k=F^{-1}_{t_2,t_1}\left(\frac{U_k(t_2)}{U_k(t_1)}\right).
	$$
	\end{theorem}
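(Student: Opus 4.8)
The plan is to reduce everything to the explicit solution formula \eqref{FK-add} and to exploit a factorization of $U_k(t)$ in which the unknown random coefficient cancels. First I would substitute \eqref{FK-add} into $U_k(t)=u_k(t)-u_k(0)$. Using $e^{-\theta\mu_k t}-1=-S_k(t)$, both terms of $u_k(t)-u_k(0)$ share the common factor $S_k(t)$, so that
\[
U_k(t)=S_k(t)\left(\frac{\sigma q_k}{\theta\mu_k}\,\xi_k-u_k(0)\right).
\]
The bracketed quantity is independent of $t$; denote it $C_k$. This factorization is the crux of the argument, as it shows that the entire time dependence of $U_k$ is carried by $S_k$, while $C_k$ absorbs the unknown noise $\xi_k$ and the initial datum $u_k(0)$.

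Next I would form the ratio at the two time points. Since $C_k$ does not depend on $t$, it cancels, leaving
\[
\frac{U_k(t_2)}{U_k(t_1)}=\frac{S_k(t_2)}{S_k(t_1)}=\frac{1-e^{-\theta\mu_k t_2}}{1-e^{-\theta\mu_k t_1}}.
\]
Comparing with the definition of $F_{a,b}$ with $a=t_2>b=t_1$ and evaluating at $x=\theta\mu_k$, the right-hand side is precisely $F_{t_2,t_1}(\theta\mu_k)$. Hence $U_k(t_2)/U_k(t_1)=F_{t_2,t_1}(\theta\mu_k)$.

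Finally I would invoke the monotonicity established just before the statement: $F_{t_2,t_1}$ is strictly decreasing on $(0,+\infty)$, hence injective there, so its inverse $F^{-1}_{t_2,t_1}$ is well defined on the range. Since $\theta\mu_k>0$, applying $F^{-1}_{t_2,t_1}$ to both sides of the last identity yields $\theta\mu_k=F^{-1}_{t_2,t_1}\big(U_k(t_2)/U_k(t_1)\big)$, as claimed.

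The one point requiring care — and what I would flag as the main (minor) obstacle — is the well-definedness of the ratio, namely that the denominator $U_k(t_1)=S_k(t_1)C_k$ is nonzero. Since $\theta\mu_k>0$ and $t_1>0$ we have $S_k(t_1)>0$, while $C_k=\frac{\sigma q_k}{\theta\mu_k}\,\xi_k-u_k(0)$ is a nondegenerate Gaussian random variable, so $\{C_k=0\}$ is a $\bP$-null event. The identity therefore holds almost surely, which is the natural reading of the statement; no integrability or limiting argument is needed, as the result is a pathwise algebraic identity.
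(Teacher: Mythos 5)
Your proof is correct and follows essentially the same route as the paper's: factor $U_k(t)=\bigl(\tfrac{\sigma q_k\xi_k}{\theta\mu_k}-u_k(0)\bigr)S_k(t)$ from \eqref{FK-add}, cancel the time-independent random coefficient in the ratio to get $F_{t_2,t_1}(\theta\mu_k)$, and invert using the monotonicity established just before the statement. Your added remark that $U_k(t_1)\neq 0$ almost surely (since the coefficient is a nondegenerate Gaussian) is a small point the paper leaves implicit, and is a welcome clarification.
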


\begin{proof}
	By \eqref{FK-add},
	\begin{equation}	\label{Uk}
	U_k(t)=\left(\frac{\sigma q_k\xi_k}{\theta\mu_k}-u_k(0)\right)S_k(t)
	\end{equation}
	and then
	$$
	\frac{U_k(t_2)}{U_k(t_1)}=F_{t_2,t_1}(\theta\mu_k),
	$$
	and since $F_{t_2,t_1}(x)$ is increasing, the inverse function $F^{-1}_{t_2,t_1}$ exists. The proof is complete.
	
\end{proof}

It turns out that making a third measurement of $U_k$ at another  specially chosen time, or by taking $t_2=2t_1$, eliminates the
need to invert the function $F_{t_2,t_1}$.

\begin{theorem}
	\label{th-t3}
	For every $t_2>t_1>0$ and every $k=1,2,\ldots,$
	$$
	\theta \mu_k=\frac{1}{t_1}\ln\frac{U_k(t_2-t_1)}{U_k(t_2)-U_k(t_1)}.
	$$ 	
	\end{theorem}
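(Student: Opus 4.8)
The plan is to reduce everything to the explicit representation \eqref{Uk} already established in the proof of Theorem \ref{th-t2}. Writing $\lambda=\theta\mu_k$ and $c_k=\sigma q_k\xi_k/(\theta\mu_k)-u_k(0)$ for brevity, that formula reads $U_k(t)=c_k\,(1-e^{-\lambda t})$. The key structural observation is that the random, $k$-dependent amplitude $c_k$ enters every $U_k(t)$ as a single multiplicative factor, so any ratio of differences of the $U_k$'s is automatically free of $c_k$ and of the noise $\xi_k$, leaving a purely deterministic function of $\lambda$, $t_1$, and $t_2$. This is exactly the feature that lets one read off $\theta\mu_k$ without having to invert $F_{t_2,t_1}$ as in Theorem \ref{th-t2}.

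Concretely, I would substitute the representation into the numerator and the denominator of the claimed ratio. The numerator is $U_k(t_2-t_1)=c_k\,(1-e^{-\lambda(t_2-t_1)})$, while the denominator is
$$
U_k(t_2)-U_k(t_1)=c_k\big((1-e^{-\lambda t_2})-(1-e^{-\lambda t_1})\big)=c_k\,(e^{-\lambda t_1}-e^{-\lambda t_2}),
$$
so the common factor $c_k$ cancels. The decisive algebraic step is to factor the denominator as $e^{-\lambda t_1}-e^{-\lambda t_2}=e^{-\lambda t_1}(1-e^{-\lambda(t_2-t_1)})$, exhibiting the same factor $1-e^{-\lambda(t_2-t_1)}$ that appears in the numerator. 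Cancelling it yields
$$
\frac{U_k(t_2-t_1)}{U_k(t_2)-U_k(t_1)}=\frac{1}{e^{-\lambda t_1}}=e^{\lambda t_1},
$$
and taking logarithms and dividing by $t_1$ gives $\lambda=\theta\mu_k$, which is the assertion.

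There is no genuine analytic obstacle here; the entire content is the single factorization above, and the conclusion is an exact identity rather than a limiting or asymptotic statement. The only points deserving a word of care concern well-posedness of the expression. Since $t_2>t_1>0$ and $\lambda=\theta\mu_k>0$, both $e^{-\lambda t_1}-e^{-\lambda t_2}$ and $1-e^{-\lambda(t_2-t_1)}$ are strictly positive, so numerator and denominator vanish precisely when $c_k=0$; because $\xi_k$ has a continuous distribution, $c_k\neq 0$ almost surely, and hence the ratio and its logarithm are well defined. The argument also immediately covers the special case $t_2=2t_1$ flagged before the statement: there $t_2-t_1=t_1$, so $U_k(t_2-t_1)=U_k(t_1)$ and the identity requires measurements at only the two time points $t_1$ and $2t_1$.
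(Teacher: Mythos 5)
Your proof is correct and follows essentially the same route as the paper's: both substitute the representation $U_k(t)=\bigl(\sigma q_k\xi_k/(\theta\mu_k)-u_k(0)\bigr)S_k(t)$, use the factorization $S_k(t_2)-S_k(t_1)=e^{-\theta\mu_k t_1}S_k(t_2-t_1)$ to cancel the random amplitude and the common factor, and take logarithms. Your added remark that the ratio is well defined almost surely is a small bonus not spelled out in the paper, but it does not change the argument.
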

\begin{proof}
	By \eqref{Uk},
	\begin{equation*}
\begin{split}
	U_k(t_2)-U_k(t_1)&=\left(\frac{\sigma q_k\xi_k}{\theta\mu_k}-u_k(0)\right)
	\big(S_k(t_2)-S_k(t_1)\big)\\
&=
	\left(\frac{\sigma q_k\xi_k}{\theta\mu_k}-u_k(0)\right)e^{-\theta\mu_kt_1}
	S_k(t_2-t_1),
	\end{split}
\end{equation*}
	whereas
	$$
	U_k(t_2-t_1)=\left(\frac{\sigma q_k\xi_k}{\theta\mu_k}-u_k(0)\right)	 S_k(t_2-t_1).
	$$
	\end{proof}

\begin{remark}
	It is not at all surprising that the quantity $\theta\mu_k$ can be determined exactly: for every
	fixed $k$ and every collection of time moments
	$0<t_1<t_2<\cdots<t_m$, the support of the Gaussian vector
	$(U_k(t_1),\ldots, U_k(t_m))$ in $\bR^m$ is a line. As a result, the measures corresponding to
	different values of $\theta\mu_k$ are  singular, being supported on different lines. In this regard, the situation is similar to time-only noise model considered
in \cite{CialencoLototsky2009}.
\end{remark}

To estimate $\sigma$, define
$$
X_k=\frac{\theta\mu_kU_k(t)}{q_kS_k(t)}, \quad k=1,\dots, N
$$
 so that, for all $t>0$, the random variables $X_1,\ldots,X_N$ are i.i.d. Gaussian with mean
 0 and variance $\vartheta=\sigma^2$. Note that, with Theorems \ref{th-t2} and \ref{th-t3} in mind, we can indeed assume that $X_1,\ldots,X_N$ are observable.
  Then the following result is immediate.

  \begin{theorem} \label{th:AdditiveMLESigma}
  	The  maximum likelihood	estimator of $\vartheta$ is
  	$$
  	\hat{\vartheta}_N=\frac{1}{N}\sum_{k=1}^{N}
	X_k^2.
$$
This estimator has the following properties:
\begin{enumerate}
	\item It is the minimal variance unbiased estimator of $\vartheta$.
	\item It is strongly consistent:
	$\lim_{N\to \infty}\hat{\vartheta}_N=\sigma^2$ with probability one.
	\item It is asymptotically normal:
$$
\lim_{N\to \infty} \sqrt{N}(\hat{\vartheta}_N-\sigma^2)=\mathcal{N}(0,2\sigma^4)
$$
in distribution;  $\mathcal{N}(0,2\sigma^4)$ is a Gaussian random variable
with mean zero and variance $2\sigma^4$.
 \end{enumerate}
\end{theorem}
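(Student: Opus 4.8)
The plan is to exploit the reduction already in place: the preceding construction shows that $X_1,\dots,X_N$ are i.i.d.\ $\mathcal{N}(0,\vartheta)$ with $\vartheta=\sigma^2$, so the problem collapses to the classical task of estimating the variance of a centered normal population, and every assertion follows from standard arguments. First I would derive the estimator by maximizing the likelihood. Writing the log-likelihood
$$
\ell_N(\vartheta)=-\frac{N}{2}\ln(2\pi)-\frac{N}{2}\ln\vartheta-\frac{1}{2\vartheta}\sum_{k=1}^N X_k^2,
$$
the stationarity equation $\partial_\vartheta \ell_N=0$ reads $-N/(2\vartheta)+(2\vartheta^2)^{-1}\sum_k X_k^2=0$, whose unique solution is $\hat{\vartheta}_N=N^{-1}\sum_{k=1}^N X_k^2$; checking that the second derivative is negative there confirms it is a maximizer. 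This part is routine calculus.

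For property~(1), unbiasedness is immediate from $\bE X_k^2=\vartheta$. For the minimal-variance claim I would invoke the Lehmann--Scheff\'e theorem: the centered-normal model is a one-parameter exponential family in which $T=\sum_{k=1}^N X_k^2$ is a complete sufficient statistic for $\vartheta$, and $\hat{\vartheta}_N=T/N$ is an unbiased function of $T$, hence the unique minimum-variance unbiased estimator. Equivalently, and this also produces the constant needed later, one computes $\mathrm{Var}(\hat{\vartheta}_N)=N^{-2}\sum_{k=1}^N \mathrm{Var}(X_k^2)=2\vartheta^2/N$ using $\bE X_k^4=3\vartheta^2$, and checks that this matches the Cram\'er--Rao bound $1/I_N(\vartheta)=2\vartheta^2/N$ arising from the Fisher information $I_N(\vartheta)=N/(2\vartheta^2)$; attaining the bound forces minimality among unbiased estimators.

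Properties~(2) and~(3) are then direct consequences of the i.i.d.\ structure of the sequence $\{X_k^2\}$. Strong consistency follows from the strong law of large numbers, since $\bE X_1^2=\sigma^2<\infty$ gives $\hat{\vartheta}_N\to\sigma^2$ almost surely. Asymptotic normality follows from the classical central limit theorem applied to $X_k^2$, whose mean is $\sigma^2$ and whose variance, by the fourth-moment computation above, equals $2\sigma^4$; this yields
$$
\sqrt{N}\big(\hat{\vartheta}_N-\sigma^2\big)=\frac{1}{\sqrt{N}}\sum_{k=1}^N\big(X_k^2-\sigma^2\big)\ \longrightarrow\ \mathcal{N}(0,2\sigma^4)
$$
in distribution. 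There is no serious obstacle here: the single technical point is property~(1), for which I would lean on Lehmann--Scheff\'e for the clean conceptual route while using the explicit variance calculation $\mathrm{Var}(X_k^2)=2\sigma^4$ both to verify efficiency and to pin down the limiting variance in property~(3).
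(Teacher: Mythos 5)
Your proposal is correct and follows exactly the route the paper intends: the paper reduces the problem to estimating the variance of the i.i.d.\ centered Gaussian sample $X_1,\dots,X_N$ and then declares the result ``immediate,'' and you simply supply the standard details (calculus for the MLE, Lehmann--Scheff\'e for minimal variance, SLLN for strong consistency, CLT with $\mathrm{Var}(X_k^2)=2\sigma^4$ for asymptotic normality). No gaps.
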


Direct computations show that
 the estimator $\hat{\vartheta}_N$ is also asymptotically efficient, both in the
Fisher sense (the lower bound in the Cramer-Rao inequality is achieved), and in the minimax sense;
for a large class of loss functions, the corresponding Bayesian estimator of $\sigma^2$ is asymptotically
equivalent to $\hat{\vartheta}_N$. For details, see \cite[Section III.3]{IbragimovKhasminskiiBook1981}.

\subsection{Observation in physical space}\label{sec:disc-samp-add}

%The estimation of the additive model \eqref{eq4} and the shell model \eqref{eq5} are mainly focused on time sampling.
In this section, we will consider a different sampling scheme. In contrast to the previous section, where the measurements were done in the Fourier space, here we will assume that the solution, or its spacial derivative, is observed in the physical space. It was noted in \cite{CialencoHuang2017} that,
 to estimate  the drift and/or volatility in a stochastic heat equation driven by a space-time
  white noise, it is enough to observe the solution  at only one fixed time point and at some discrete spacial points from a fixed interval. The key ingredient in the proofs was a special representation of the solution. We will follow similar arguments herein.

Let us consider the one-dimensional heat equation driven by an additive, spacial only, noise, with zero boundary conditions and zero initial data: \begin{equation}\label{eq:SimpleAdditiveModel}
\begin{cases}
\dot{u}(t,x) - \theta u_{xx}(t,x)= \sigma \dot{W}(x),\quad t>0, \ 0<x<\pi, \\
u(t,0)=u(t,\pi)=0, \ u(0,x)=0.
\end{cases}
\end{equation}
In this case, the normalized eigenfunctions of the Laplacian in $L_2((0,\pi))$ are given by $h_k =\sqrt{2/\pi} \sin(kx),\ k\in\bN$, with corresponding eigenvalues $\mu_k=-k^2, \ k\in\bN$. Moreover, we will assume that the noise is white in space, i.e. $\dot{W}(x) = \sum_{k=1}^{\infty}\xi_kh_k(x)$, where $\xi_k$ is a sequence of i.i.d. standard normal random variables on $(\Omega, \cF,\bP)$.
In view of \eqref{FK-add}, the Fourier modes of the solution of $u(t,x)$ of \eqref{eq:SimpleAdditiveModel} with respect to $\{h_k\}_{k\in\bN}$ are given by
$$
u_k(t)=\frac{\sigma}{\theta}\xi_k\frac{1-e^{-k^2\theta t}}{k^2},\quad k\in\bN.
$$
By \cite[Theorem 5.2]{KimLototsky2017},  the random field
 $u=u(t,x)$ belongs, with probability one,
 to the H\"older space $C^{3/4-\varepsilon,3/2-\varepsilon}_{t,x}((0,T)\times(0,\pi))$
for every $\varepsilon>0$. In particular, $u$ is differentiable in $x$, and
\begin{equation}\label{eq:ux}
u_x(t,x)=\sum_{k=1}^{\infty} \left(\sqrt{\frac{2}{\pi}}\frac{\sigma}
{\theta}\xi_k (1-e^{-k^2\theta t})
\frac{\cos(kx)}{k}
\right).
\end{equation}
Next, for a fixed $t>0$, we write  $u_x(t,\cdot)$ as follows:
\begin{equation}
\label{ux=BM+I}
\begin{split}
u_x(t,x)&=\frac{\sigma}{\theta} \sum_{k=1}^{\infty}
\xi_k \sqrt{\frac{2}{\pi}}\frac{\cos(kx)}{k}-
\sum_{k=1}^{\infty}\sqrt{\frac{2}{\pi}}\frac{\sigma}{\theta}\xi_k e^{-k^2\theta t}
\frac{\cos(kx)}{k}\\
&=\frac{\sigma}{\theta}\sum_{k=1}^{\infty}
\xi_k \sqrt{\frac{2}{\pi}} \left(
\frac{\cos(kx)}{k}-\frac{1}{k}\right)+
\sum_{k=1}^{\infty}\sqrt{\frac{2}{\pi}}\frac{\sigma}{\theta} \frac{\xi_k}{k}
-\sum_{k=1}^{\infty}\sqrt{\frac{2}{\pi}}\frac{\sigma}{\theta}\xi_k e^{-k^2\theta t}
\frac{\cos(kx)}{k}\\
&=:-\frac{\sigma}{\theta} B(x) + I(x).
\end{split}
\end{equation}
Clearly, for any $t>0$ and $\omega\in\Omega$, the function $I(x),  \ x\in(0,\pi)$ is infinitely differentiable.
Because  $\{h_k\}_{k\geq 1}$ is a complete orthonormal system in $L_2([0,\pi])$, the random process $B(x)=\sum_{k\geq 1}\int_0^xh_k(x)\xi_kdx$ is a standard Brownian motion on $[0,\pi]$; see for instance \cite[Section~3.1]{KlebanerBook2005}. Hence,
in view of  \cite[Proposition~2.1]{CialencoHuang2017},
for every  interval $[a,b]\subseteq [0,\pi]$, we have that
\begin{equation}\label{eq:BM+I}
V^2\left(u_x(t, \, \cdot\,);[a,b]\right)=V^2\left(\frac{\sigma}{\theta}B;[a,b]\right)=
\frac{\sigma^2}{\theta^2} (b-a),
\end{equation}
where $V^2(Y; [a,b])$ denotes the quadratic variation of process $Y$ on interval $[a,b]$, and over uniform partition, i.e.
$$
V^2(Y; [a,b]):= \lim_{M\to\infty} \sum_{k=0}^{M-1} |Y(x_{k+1})-Y(x_k)|^2
$$
with probability one, and where $x_k=a+(b-a)k/M, \ k=0,\ldots,M$. Assume that, for some fixed $t>0$ we measure $u_x(t,x)$ at the grid points
\begin{equation}\label{eq:grid}
\{(t,x_j),\ j=0,...,M\},
\end{equation}
where $a=x_0<x_1<\cdots<x_M=b$. Using the definition of the quadratic variation, we take the following natural estimates of $\theta^2$ and $\sigma^2$ within this sampling scheme:
\begin{align}
\check{\theta}^2_{M}:=\frac{\sigma^2(b-a)}{\sum_{j=1}^M \left(u_x(t,x_j)-u_x(t,x_{j-1})\right)^2}, \label{eq:thetaEST}\\
\check{\sigma}^2_{M}:=\frac{\theta^2 \sum_{j=1}^M \left(u_x(t,x_j)-u_x(t,x_{j-1})
\right)^2}{b-a}. \label{eq:sigmaEST}
\end{align}
As next result shows, both estimators are strongly consistent and asymptotically normal.

\begin{theorem}\label{th:thetaC-AN}
If  $\sigma$ is known, then  $\check{\theta}^2_{M}$, as an estimator of $\theta^2$,
 is strongly consistent:
$$
\lim_{M\rightarrow \infty} \check{\theta}^2_{M}=\theta^2,\
\mathbb{P}-a.s.,
$$
and asymptotically normal:
$$
\lim_{M\to \infty} \sqrt{M}\left(\check{\theta}^2_{M}-\theta^2\right)
=\mathcal{N}(0,2\theta^4)\quad \mbox{in distribution}.
$$

If  $\theta$ is known, then, $\check{\sigma}^2_{M}$, as an estimator of $\sigma^2$,
 is  strongly consistent:
$$
\lim_{M\rightarrow \infty} \check{\sigma}^2_{M}=\sigma^2,\
\mathbb{P}-a.s.,
$$
and asymptotically normal:
$$
\lim_{M\to \infty} \sqrt{M}\left(\check{\sigma}^2_{M}-\sigma^2
\right) = \mathcal{N}(0,2\sigma^4)\quad \mbox{in distribution}.
$$
\end{theorem}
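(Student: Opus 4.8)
The plan is to reduce both assertions to a single central limit theorem for the realized quadratic variation
$$
Q_M := \sum_{j=1}^M \bigl(u_x(t,x_j)-u_x(t,x_{j-1})\bigr)^2,
$$
and then to transfer it to the two estimators, which are explicit functions of $Q_M$, namely $\check{\theta}^2_M = \sigma^2(b-a)/Q_M$ and $\check{\sigma}^2_M = \theta^2 Q_M/(b-a)$. Strong consistency is then immediate: by the quadratic-variation identity \eqref{eq:BM+I}, $Q_M \to (\sigma^2/\theta^2)(b-a)$ almost surely, and since $q\mapsto \sigma^2(b-a)/q$ and $q\mapsto \theta^2 q/(b-a)$ are continuous at $q_0 := (\sigma^2/\theta^2)(b-a)>0$, the continuous mapping theorem yields $\check{\theta}^2_M\to\theta^2$ and $\check{\sigma}^2_M\to\sigma^2$ almost surely.

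For asymptotic normality the key step is to prove that
$$
\sqrt{M}\Bigl(Q_M - \tfrac{\sigma^2}{\theta^2}(b-a)\Bigr) \xrightarrow{d} \mathcal{N}\Bigl(0,\tfrac{2\sigma^4}{\theta^4}(b-a)^2\Bigr).
$$
Using the decomposition \eqref{ux=BM+I}, $u_x(t,\cdot)=-\tfrac{\sigma}{\theta}B+I$, I would write each increment as $\Delta_j := u_x(t,x_j)-u_x(t,x_{j-1}) = -\tfrac{\sigma}{\theta}\Delta_j B + \Delta_j I$, where $\delta_M := (b-a)/M$, and expand
$$
Q_M = \tfrac{\sigma^2}{\theta^2}\sum_{j=1}^M (\Delta_j B)^2 - \tfrac{2\sigma}{\theta}\sum_{j=1}^M \Delta_j B\,\Delta_j I + \sum_{j=1}^M (\Delta_j I)^2.
$$
The leading term carries the limiting fluctuation: the increments $\Delta_j B$ over the uniform grid are independent $\mathcal{N}(0,\delta_M)$, so $\sum_j(\Delta_j B)^2 = \delta_M\sum_j Z_j^2$ with i.i.d. standard normal $Z_j$, and the classical central limit theorem (using $\mathrm{Var}(Z_1^2)=2$) gives $\sqrt{M}\bigl(\sum_j(\Delta_j B)^2-(b-a)\bigr)\to\mathcal{N}(0,2(b-a)^2)$. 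Because $I$ is $C^\infty$ on $[a,b]$ (as observed after \eqref{ux=BM+I}), the mean value theorem yields $|\Delta_j I|\le \delta_M\sup_{[a,b]}|I'|$ pathwise, so $\sum_j(\Delta_j I)^2 = O(\delta_M)=O(1/M)$ almost surely, which is negligible after multiplication by $\sqrt{M}$.

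The cross term is the main obstacle, because $B$ and $I$ are built from the same sequence $\{\xi_k\}$ and are therefore correlated, so a crude Cauchy--Schwarz or modulus-of-continuity bound on $\sum_j\Delta_j B\,\Delta_j I$ loses too much. Instead I would exploit its stochastic-integral structure: writing $\Delta_j I = I'(x_{j-1})\delta_M + O(\delta_M^2)$ reduces the cross term to $\delta_M\sum_j I'(x_{j-1})\Delta_j B$ up to a manifestly negligible remainder, and I would control $\bE\bigl[\bigl(\sum_j I'(x_{j-1})\Delta_j B\bigr)^2\bigr]$ by expanding it with the Isserlis (Wick) formula for the jointly Gaussian, mean-zero family $\{I'(x_{j-1}),\Delta_j B\}$. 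The diagonal pairing of the independent increments gives $\delta_M\sum_j\bE[I'(x_{j-1})^2]\to\int_a^b\bE[I'(y)^2]\,dy=O(1)$, while the remaining Wick pairings involve covariances $\bE[I'(x_{j-1})\Delta_\ell B]=O(\delta_M)$ between a fixed smooth field and a single Brownian increment and are likewise $O(1)$ after summation; hence $\sum_j I'(x_{j-1})\Delta_j B = O_p(1)$, so the cross term is $O_p(\delta_M)=O_p(1/M)$ and $\sqrt{M}$ times it tends to zero in probability.

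Combining the three pieces yields the displayed central limit theorem for $Q_M$, which I would then transfer to the estimators. Since $\check{\sigma}^2_M = \theta^2 Q_M/(b-a)$ is linear in $Q_M$, Slutsky's theorem gives $\sqrt{M}(\check{\sigma}^2_M-\sigma^2)\to\mathcal{N}(0,2\sigma^4)$, the variance being $(\theta^2/(b-a))^2\cdot(2\sigma^4/\theta^4)(b-a)^2 = 2\sigma^4$. Since $\check{\theta}^2_M = \sigma^2(b-a)/Q_M = g(Q_M)$ with $g(q)=\sigma^2(b-a)/q$ smooth near $q_0$ and $g'(q_0)=-\theta^4/(\sigma^2(b-a))$, the delta method gives $\sqrt{M}(\check{\theta}^2_M-\theta^2)\to\mathcal{N}(0,2\theta^4)$, the variance being $(g'(q_0))^2\cdot(2\sigma^4/\theta^4)(b-a)^2 = 2\theta^4$. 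The only genuinely delicate point is the second-moment estimate for the correlated cross term in the preceding paragraph; everything else is either the classical quadratic-variation central limit theorem or a routine transfer.
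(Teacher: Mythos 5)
Your argument is correct and is essentially a self-contained version of what the paper outsources to \cite[Theorems 3.1 and 3.2]{CialencoHuang2017}: decompose $u_x(t,\cdot)=-\tfrac{\sigma}{\theta}B+I$ via \eqref{ux=BM+I}, obtain the central limit theorem from the i.i.d.\ chi-squared sum $\delta_M\sum_j Z_j^2$, and show that the smooth and cross terms are negligible after the $\sqrt{M}$ scaling. You correctly identify the cross term as the only delicate point (Cauchy--Schwarz alone gives $O(M^{-1/2})$, which does not vanish after multiplication by $\sqrt{M}$), and your second-moment/Isserlis bound on $\sum_j I'(x_{j-1})\Delta_j B$ --- using that $I$ is a.s.\ smooth with bounded derivatives on $[a,b]$ and that $\bE[I'(x_{j-1})\Delta_\ell B]=O(\delta_M)$ --- closes that gap; the transfer to $\check{\sigma}^2_M$ and $\check{\theta}^2_M$ by Slutsky's theorem and the delta method is routine and your variance computations check out.
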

The proof is a direct consequence of \cite[Theorem 3.1 and 3.2]{CialencoHuang2017}.

\medskip

In reality, the observer usually has a direct access to $u$ rather than $u_x$.
%Thus, we will assume now that the solution $u$ is observed at the uniform grid points \eqref{eq:grid}, and let $\delta:= x_{j+1}-x_j=(b-a)/M$.
It is therefore natural  to replace the values of $u_x(t,x)$ in \eqref{eq:thetaEST} and \eqref{eq:sigmaEST} by their finite difference approximations, for example using the forward finite difference $(u(t,x+\delta) - u(t,x))/\delta$, with $\delta=(b-a)/M$, and consider the following estimators for $\theta^2$ and $\sigma^2$:
\begin{align}\label{eq:estTheta2}
\tilde{\theta}^2_{M}&:= \frac{\sigma^2(b-a)^3}{M^2\sum_{j=1}^{M-1}\left(u(t,x_{j+1})-2u(t,x_j)+u(t,x_{j-1})\right)^2}, \\
\label{eq:estSigma2}
\tilde{\sigma}^2_{M}&:= \frac{\theta^2M^2\sum_{j=1}^{M-1}\left(u(t,x_{j+1})-2u(t,x_j)+u(t,x_{j-1})\right)^2}{(b-a)^3}.
\end{align}
Note that $u(t,\cdot)$ is H\"older continuous of order $3/2-\varepsilon$, for any $\varepsilon>0$, and  higher order finite difference approximations are not immediately applicable.  We conjecture that these estimators are also consistent and asymptotically normal, while the rigourous proof of asymptotic properties of these estimators remain an open problem. It is also interesting to note that naive numerical methods of approximation of the solution lead to undesirable results; see Example~2 for more details.

\section{Examples}\label{sec:examples}

In this section, we will present several examples of SPDEs that fit the theoretical results derived in previous sections.

Let $G$ be a bounded and smooth domain in $\mathbb{R}^d$, and let us consider the Laplace operator $\boldsymbol{\Delta}$
on $G$ with zero boundary conditions. It is well known  \cite{Shubin} that $\boldsymbol{\Delta}$ has only point spectrum,
 the set of normalized eigenfunctions is a complete orthonormal system in $H=L_2(G)$, and,
with  $\lambda_k, \ k\in\mathbb{N}$, denoting the eigenvalues of $-\boldsymbol{\Delta}$, arranged in increasing order, $\lambda_k \sim  k^{2/d}$.

We take $A = (-\boldsymbol\Delta)^\beta$, and $A_0 = (-\boldsymbol\Delta)^{\beta_0}$, for some $\beta, \ \beta_0>0$.
Then
$$
\mu_k \sim k^{2\beta/d}, \quad \nu_k \sim k^{2\beta_0/d}.
$$

\smallskip\noindent
\textbf{Shell Model.}
We consider the following equation
\begin{equation}\label{eq:FracLap}
\dot{u}+\left(\theta (-\boldsymbol\Delta)^\beta+ (-\boldsymbol\Delta)^{\beta_0}\right)u =\sum_{k=1}^{\infty} (\sigma q_k+p_k)u_k\xi_kh_k,\ \ t>0,
\end{equation}
with $u(0)\in H$, $\sigma>0$,  and $\bar{\beta}=\max\left(\beta, \beta_0\right)>0$, so that
$$
\mu_k\sim k^{2\beta/d},\quad \nu_k\sim k^{2\beta_0/d},\quad
\theta\mu_k+\nu_k\sim k^{2\bar{\beta}/d};
$$
when $\beta\geq \beta_0$, the last relation also imposes a  condition on $\theta$ in  the form
of a lower bound  $\theta>\theta_0$ for some $\theta_0\in \mathbb{R}$.

If
\begin{equation}
\label{pq-laplace}
q_k+p_k = o\big(k^{\bar{\beta}/d}\big),
\end{equation}
then \eqref{eq:2a} holds and
 \eqref{eq:FracLap} is well posed on $[0,T]$ for every $T$.

 If
 \begin{equation}
 \label{pq-laplace-1}
 q_k+p_k = O\big(k^{\bar{\beta}/d}\big),
 \end{equation}
 then \eqref{eq:2a-T} holds and
 \eqref{eq:FracLap} is well posed on $[0,T]$ for sufficiently small $T$.

To proceed, let us first assume that $q_k=1$ and $p_k=0$. Then \eqref{pq-laplace}
and \eqref{reg-c2} hold, whereas  \eqref{reg-c1} becomes
 \begin{equation}
 \label{ex-c1-1}
  \beta\geq -\frac{d}{4};
 \end{equation}
 with a strict inequality in \eqref{ex-c1-1}, we get
 $$
 \Psi_N\sim N^{(4\beta+d)/d},\ \Phi_N\sim N.
 $$

More generally, if $q_k+p_k\sim k^r$, $0\leq r<\bar{\beta}/d$,  and
\eqref{pq-sufficient} holds,  then  \eqref{pq-laplace}
and \eqref{reg-c2} hold, whereas  \eqref{reg-c1} becomes
$$
\beta\geq \frac{rd}{2}-\frac{d}{4},
$$
 In the ``critical'' case $r=\bar{\beta}/d$ (cf. \eqref{pq-laplace-1}), we get
 $$
 \beta\geq \frac{\bar{\beta}}{2}-\frac{d}{4},
 $$
 which is similar to the corresponding condition from \cite{HuebnerRozovskii1995}.

 On the other hand, if $q_k+p_k\sim e^{-k}$, then no additional conditions on $\beta$
 are necessary to satisfy \eqref{reg-c1}; for example,  if $\beta_0>0$, then both
 \eqref{eq:2a} and \eqref{reg-c1} hold for every $\beta\in \bR$.

\smallskip\noindent
\textbf{Additive Model.} We now consider the fractional heat equation driven by  additive noise
\begin{equation}\label{eq:FrLapAdd}
  \dot{u}+\theta (-\boldsymbol\Delta)^{\beta}u = \sigma \sum_{k=1}^{\infty}q_k\xi_kh_k, \ t>0,
\end{equation}
with $u(0)\in H$ and $\beta\in\mathbb{R}$.
The existence and uniqueness of the solution, and all asymptotic properties of the considered estimators hold true if \eqref{cond-add} is satisfied, which now becomes
$$
\sum_{k=1}^{\infty}q_k^2 k^{-4\beta/d} <\infty.
$$
In particular, one can take
$$
q_k\sim k^{\delta} \big(\ln k\big)^{r},\ \delta<\frac{2\beta}{d}-\frac{1}{2},\ r\in \mathbb{R}.
$$
Note that if   $\beta\leq 0$,  then equation \eqref{eq:FrLapAdd}, while not an SPDE,
 can still  be  a  legitimate stochastic evolution
equation.

\section{Numerical Experiments}\label{sec:Numerics}

\noindent
\textbf{Example 1. Shell model.} Let us consider the equation \eqref{eq:FracLap} in dimension $d=1$, and $G=[0,\pi]$. Hence, $\lambda_k=k^2, \ h_k(x)=\sqrt{2/\pi}\sin(kx), \ k\in\bN$. We take the following set of parameters
$$
\beta=1,\ \beta_0= 0.5, \  \theta_0=0.5, \ \sigma_0=0.6, \ T=1, \ q_k=p_k=k,\ u(0)= x(\pi-x).
$$
Using this set of parameters, we simulated $M=5,000$ paths of the first $60$ Fourier coefficients \eqref{FK-mlt} of the solution $u(t,x)$ on a fine time grid $\delta t= 0.01$;
note that implementation of  \eqref{FK-mlt} requires no numerical approximation. Using Theorem~\ref{th:LAN} we compute the MLEs for  $\hat\theta_N$ and $\hat\sigma_N$ for each path, and consequently their sample mean and sample standard deviation. In Figure~\ref{fig:Ex1ShellFig1}, we present one realization of the estimators (circled lines) $\hat\theta_N$ and $\hat\sigma_N:=(\hat\vartheta_N)^{1/2}$, as well as the true values of the parameters (solid lines).  In Figure~\ref{fig:Ex1ShellFig2}, we display the sample mean of $\hat{\theta}_N$ and $\hat\sigma_N$.
 As expected, the estimates and their sample means converge to the true value of the parameters of interest, as the number of Fourier modes $N$ increases. Moreover, as displayed in Figure~\ref{fig:Ex1ShellFig3}, the rate of convergence of the sample standard deviation coincides with the theoretical rate given by the asymptotic normality result. Finally, in Figure~\ref{fig:Ex1ShellFig4} (left panel) we present the empirical distribution of $\hat\theta_N-\theta$ for $N=60$, superposed on the distribution of a Gaussian random variable (solid line) with mean zero and variance $1/\Psi_N$. We also present the Q-Q~plot of these two distributions; Figure~\ref{fig:Ex1ShellFig5} (left panel). The right panels of Figure~\ref{fig:Ex1ShellFig4} and Figure~\ref{fig:Ex1ShellFig5} contain similar plots for $\hat\sigma_N$. Figures~\ref{fig:Ex1ShellFig4}~and~\ref{fig:Ex1ShellFig5} validate the asymptotic normality of these estimators. In conclusion, the obtained numerical results are consistent with the theoretical results from Theorem~\ref{th:LAN}.

\begin{figure}[!ht]
    \centering
    \begin{subfigure}[t]{0.47\textwidth}
        \centering
        \includegraphics[width=\linewidth]{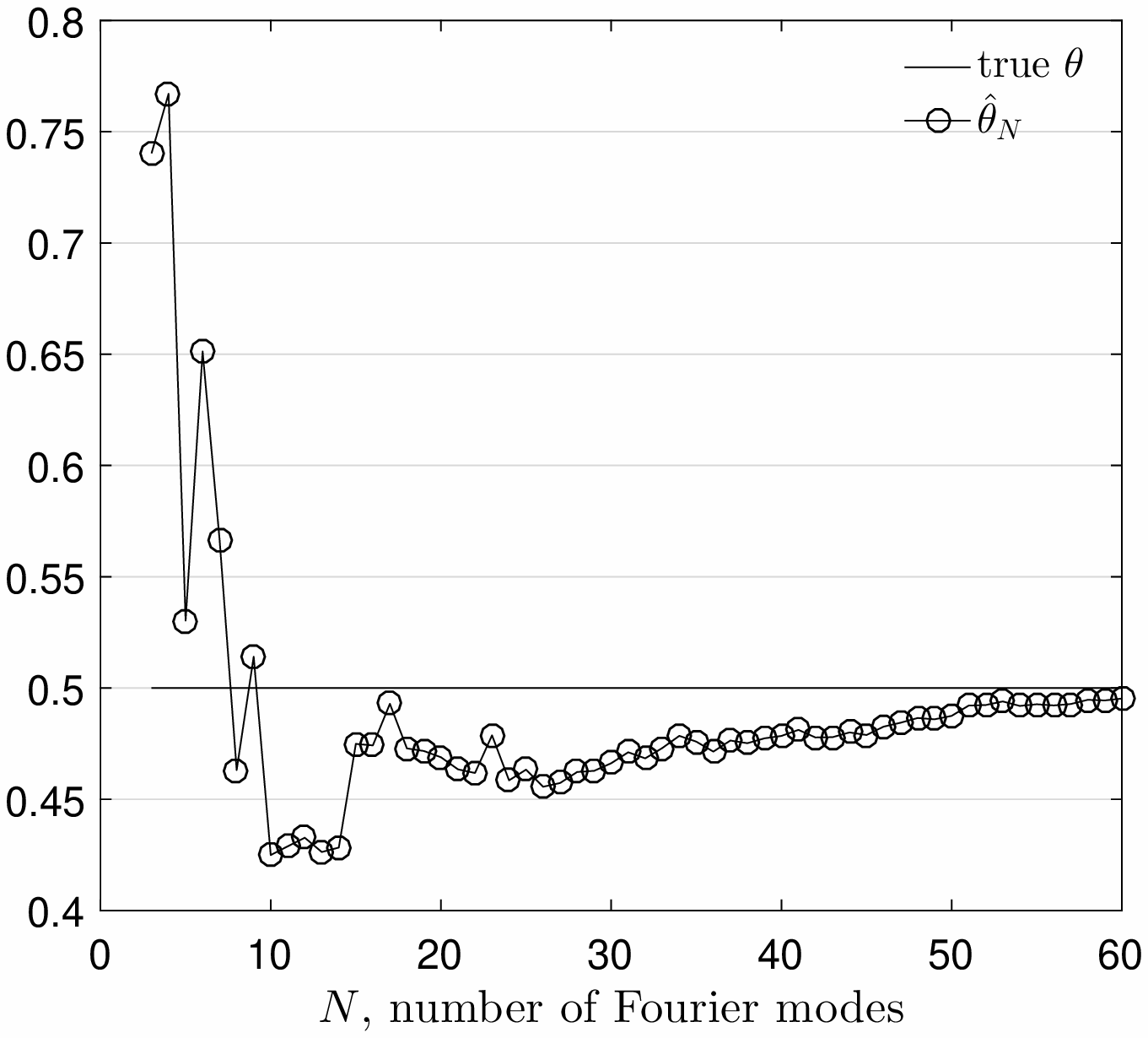}
        %\caption{Generic} \label{fig:timing1}
    \end{subfigure}
    \hfill
    \begin{subfigure}[t]{0.52\textwidth}
        \centering
        \includegraphics[width=\linewidth]{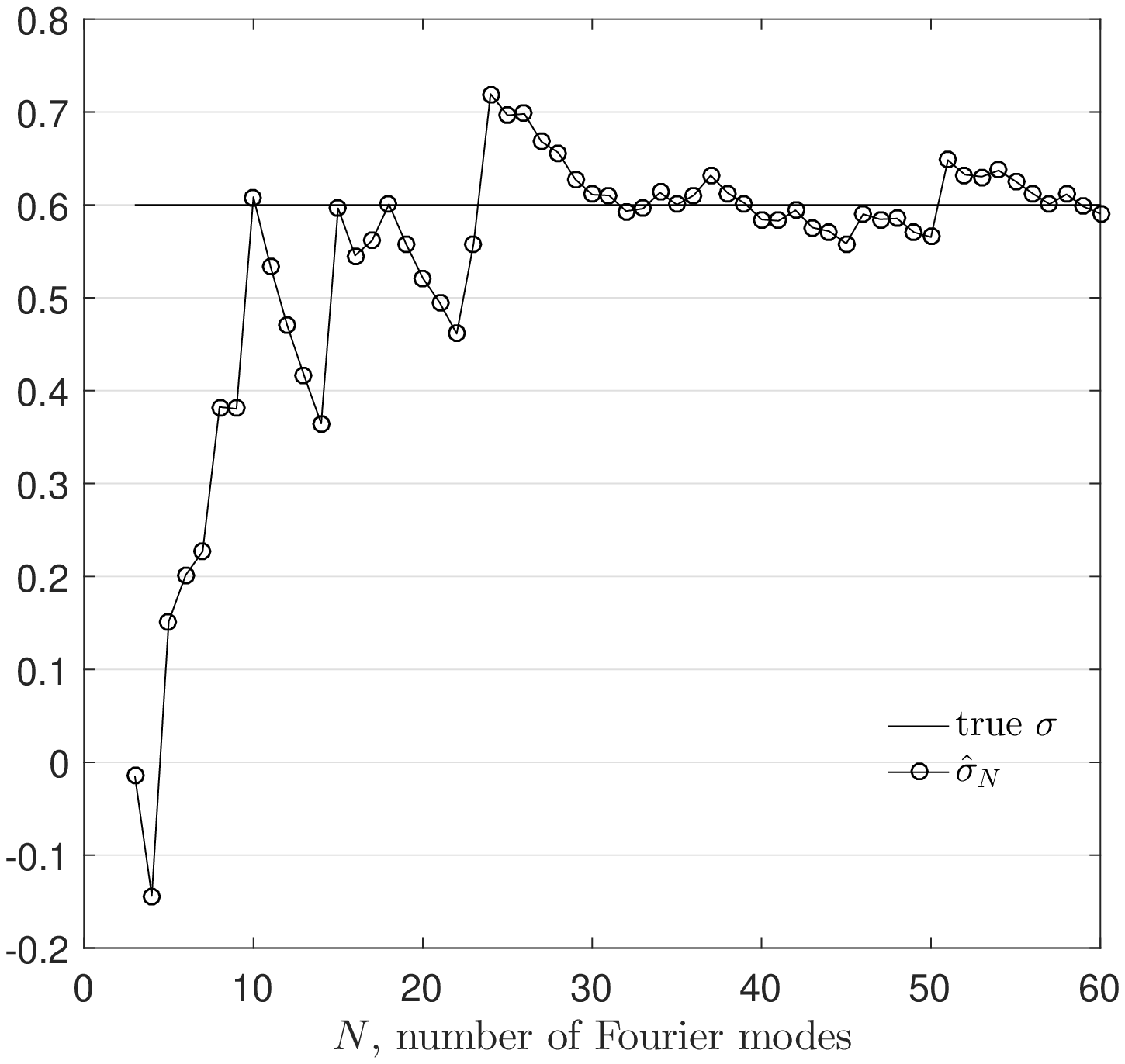}
        %\caption{Competitors} \label{fig:timing2}
    \end{subfigure}
    \caption{Shell model. Sample path of $\hat\theta_N$ (left panel) and $\hat\sigma_N$ (right panel).}
    \label{fig:Ex1ShellFig1}
\end{figure}

\begin{figure}[!ht]
    \centering
    \begin{subfigure}[t]{0.49\textwidth}
        \centering
        \includegraphics[width=\linewidth]{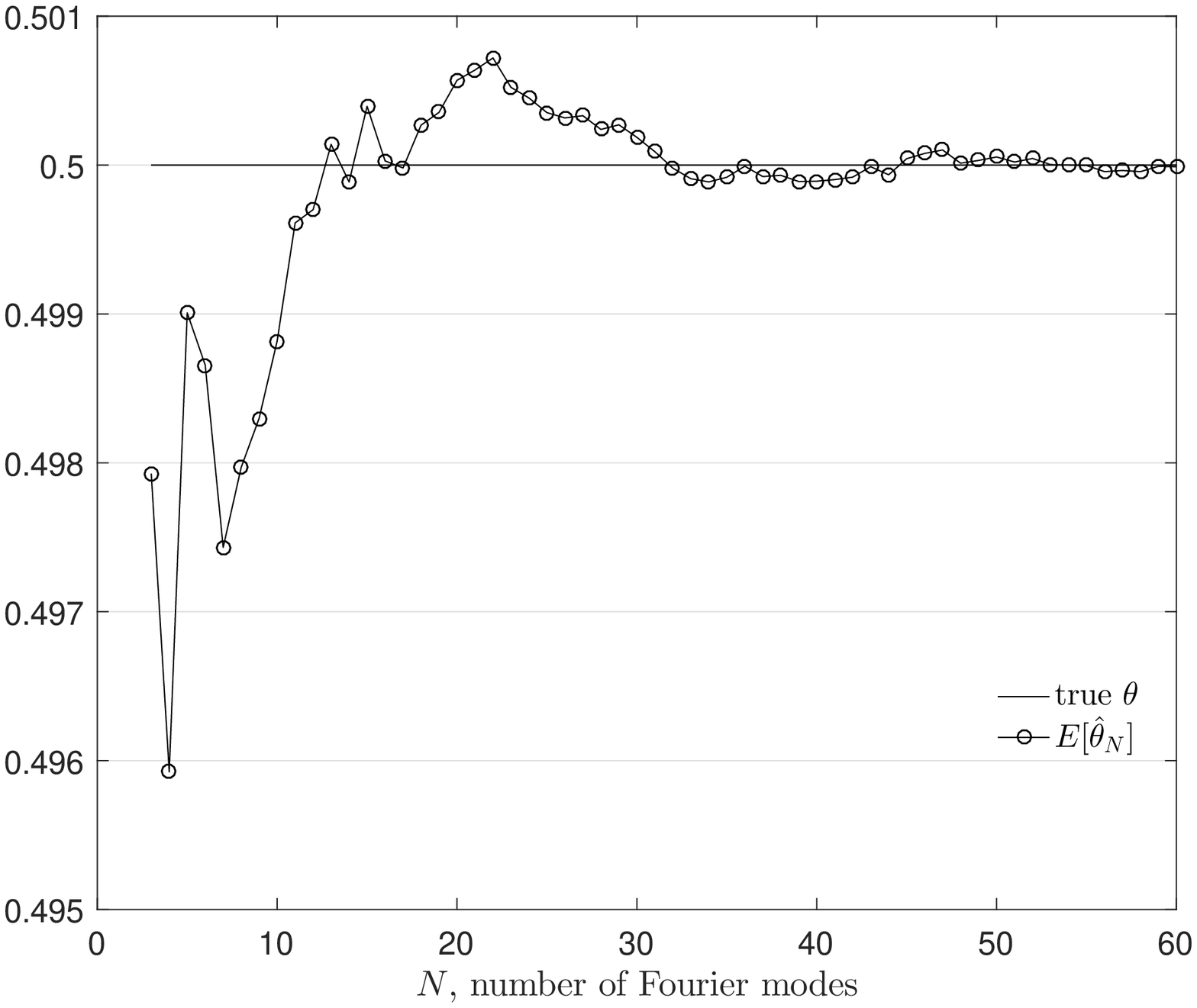}
        %\caption{Generic} \label{fig:timing1}
    \end{subfigure}
    \hfill
    \begin{subfigure}[t]{0.49\textwidth}
        \centering
        \includegraphics[width=\linewidth]{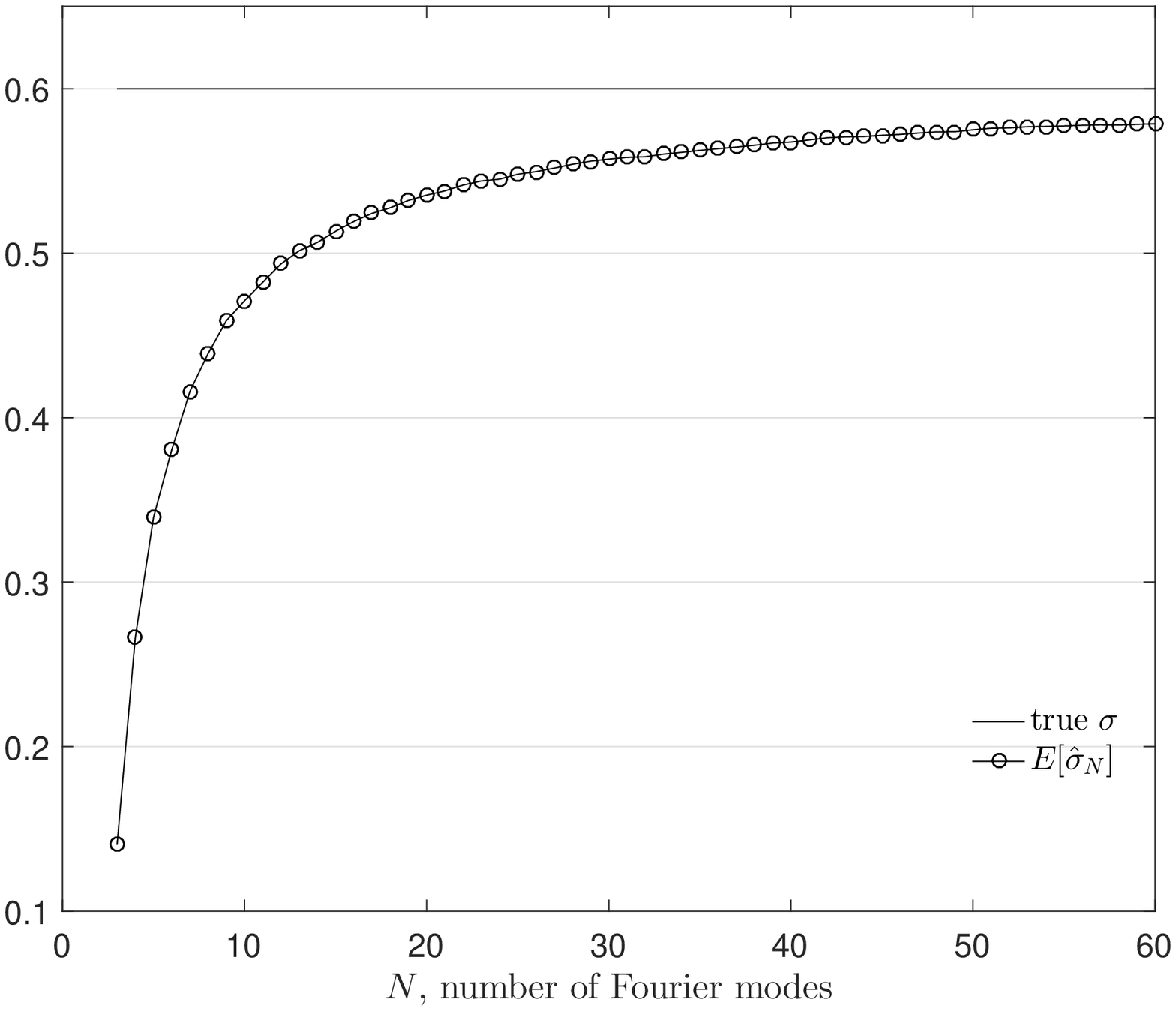}
        %\caption{Competitors} \label{fig:timing2}
    \end{subfigure}
    \caption{Shell model. Sample mean of $\hat\theta_N$ (left panel) and $\hat\sigma_N$ (right panel).}
    \label{fig:Ex1ShellFig2}
\end{figure}

\begin{figure}[!ht]
    \centering
    \begin{subfigure}[t]{0.48\textwidth}
        \centering
        \includegraphics[width=\linewidth]{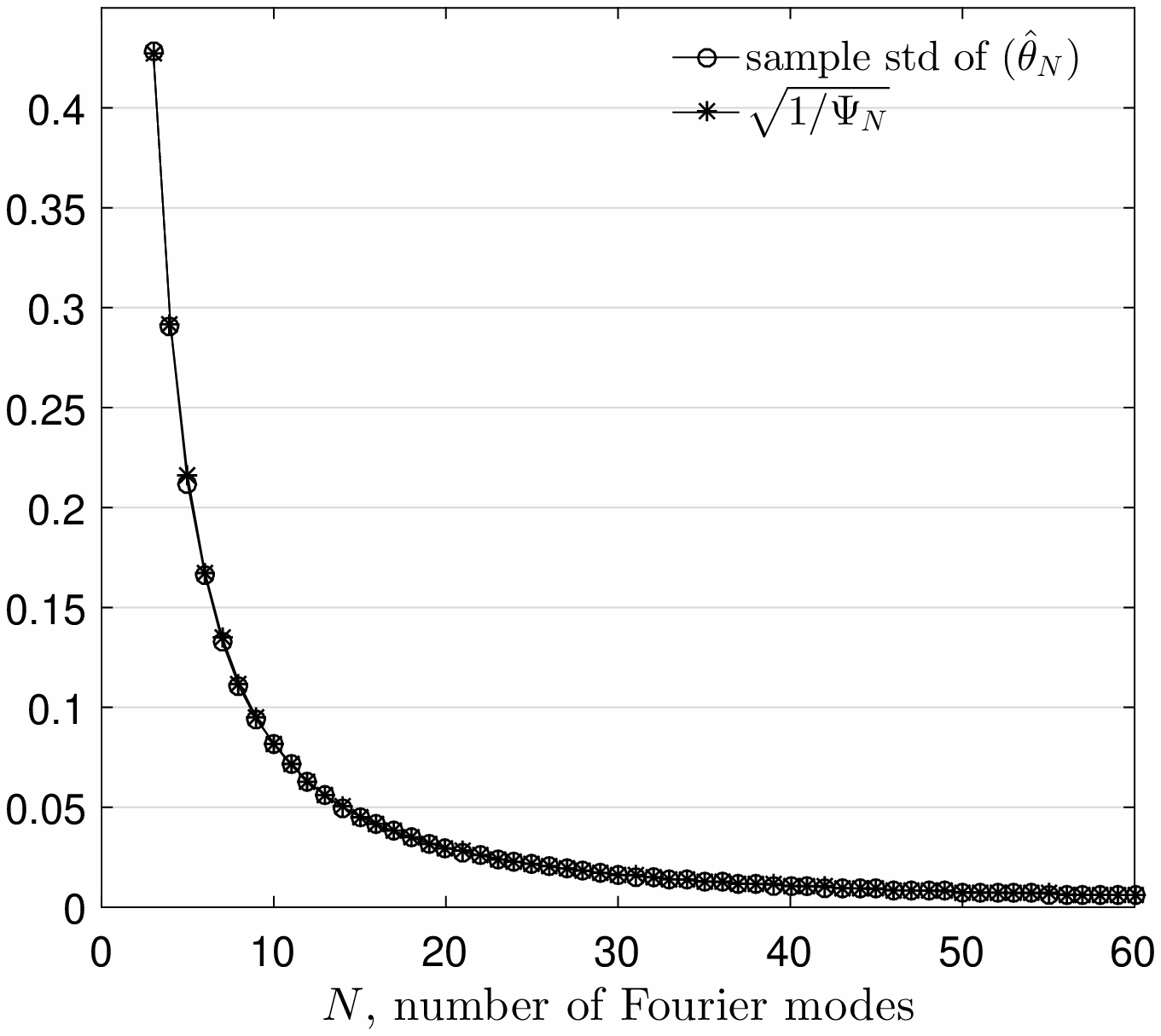}
        %\caption{Generic} \label{fig:timing1}
    \end{subfigure}
    \hfill
    \begin{subfigure}[t]{0.48\textwidth}
        \centering
        \includegraphics[width=\linewidth]{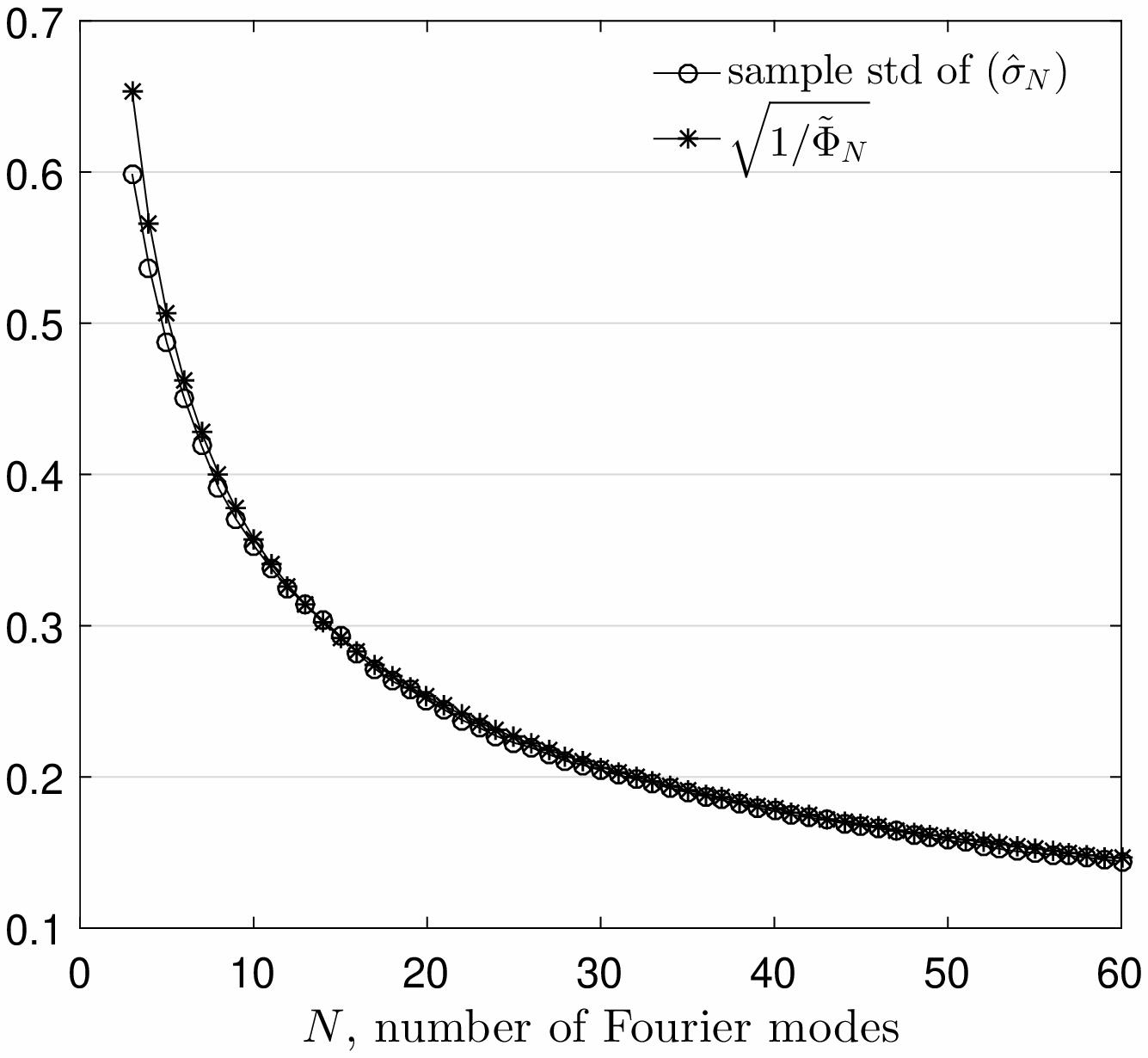}
        %\caption{Competitors} \label{fig:timing2}
    \end{subfigure}
    \caption{Shell model. Left panel: Sample standard deviation of $\hat\theta_N$ and theoretical standard deviation $\sqrt{1/\Psi_N}$ from asymptotic normality;
     Right panel: Sample standard deviation of $\hat\sigma_N$ and theoretical standard deviation $\sqrt{1/\widetilde\Phi_N}$, where $\widetilde\Phi_N$ is the Fisher information for $\hat\sigma_N$.}
    \label{fig:Ex1ShellFig3}
\end{figure}

\begin{figure}[!ht]
    \centering
    \begin{subfigure}[t]{0.48\textwidth}
        \centering
        \includegraphics[width=\linewidth]{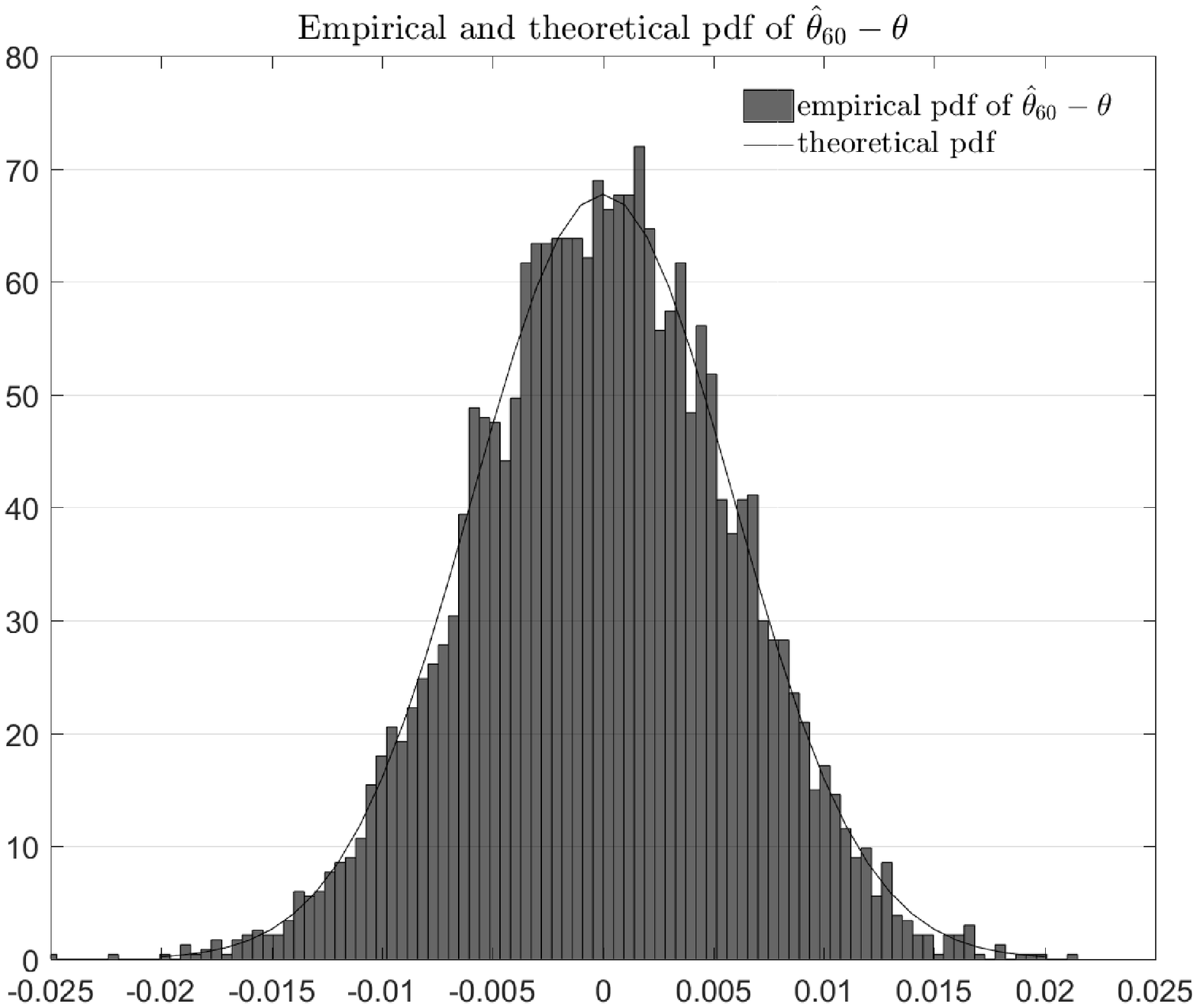}
        %\caption{Generic} \label{fig:timing1}
    \end{subfigure}
    \hfill
    \begin{subfigure}[t]{0.48\textwidth}
        \centering
        \includegraphics[width=\linewidth]{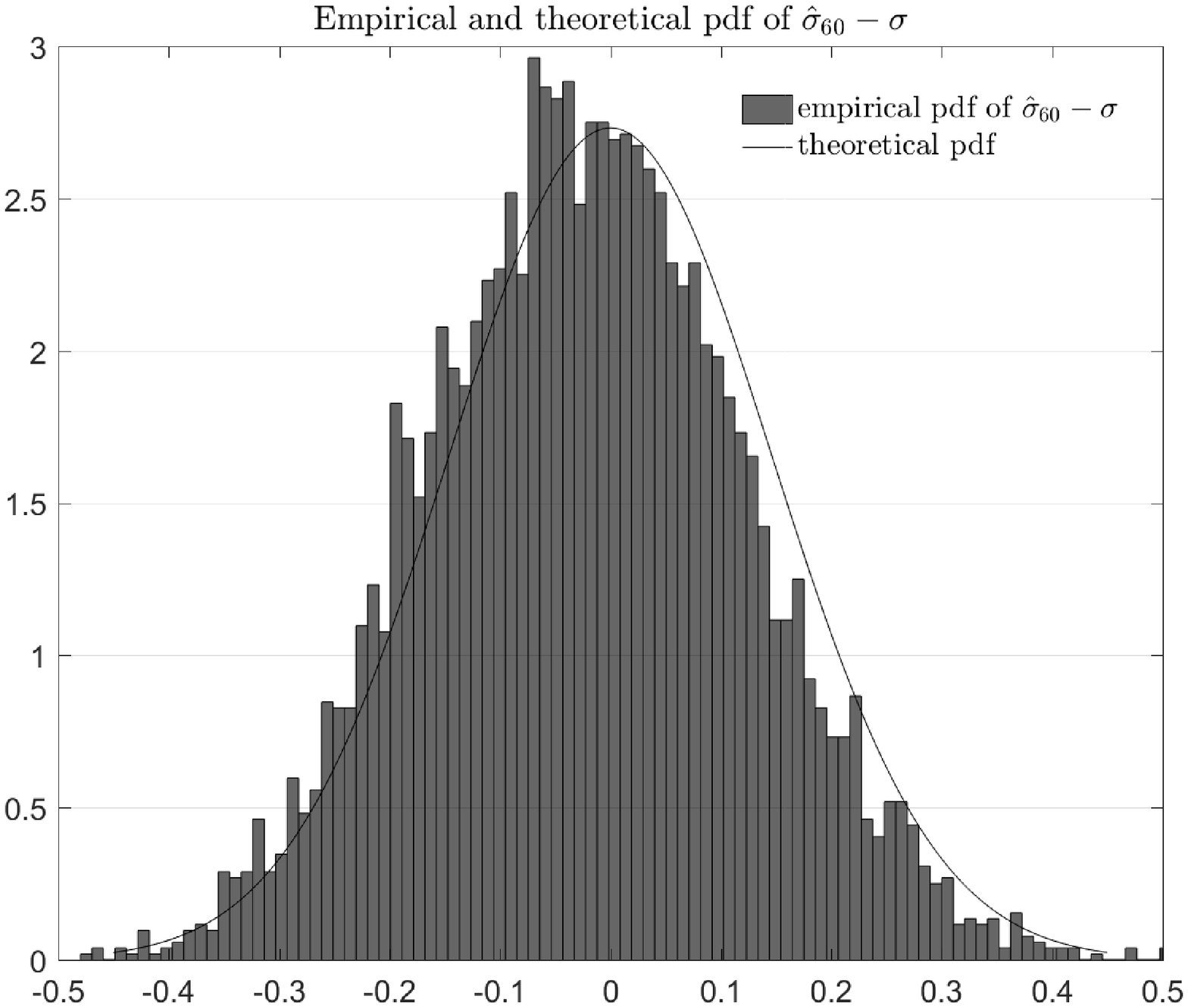}
        %\caption{Competitors} \label{fig:timing2}
    \end{subfigure}
    \caption{Shell model. Empirical distribution of $\hat\theta_{60}-\theta$ (left panel) and $\hat\sigma_{60}-\sigma$ and the pdf (solid lines) of the theoretical normal distribution from asymptotic normality.}
    \label{fig:Ex1ShellFig4}
\end{figure}

\begin{figure}[!ht]
    \centering
    \begin{subfigure}[t]{0.48\textwidth}
        \centering
        \includegraphics[width=\linewidth]{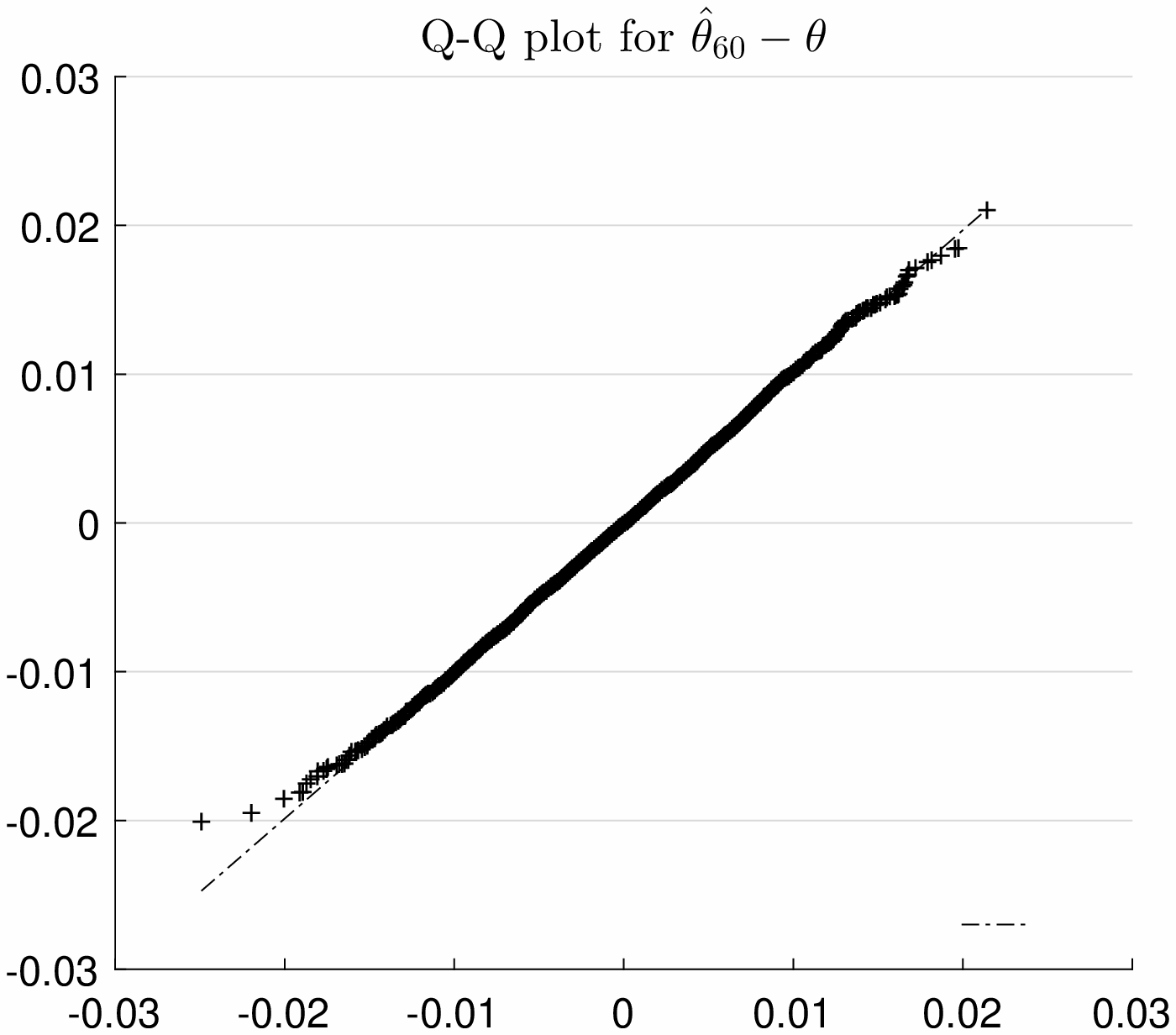}
        %\caption{Generic} \label{fig:timing1}
    \end{subfigure}
    \hfill
    \begin{subfigure}[t]{0.48\textwidth}
        \centering
        \includegraphics[width=\linewidth]{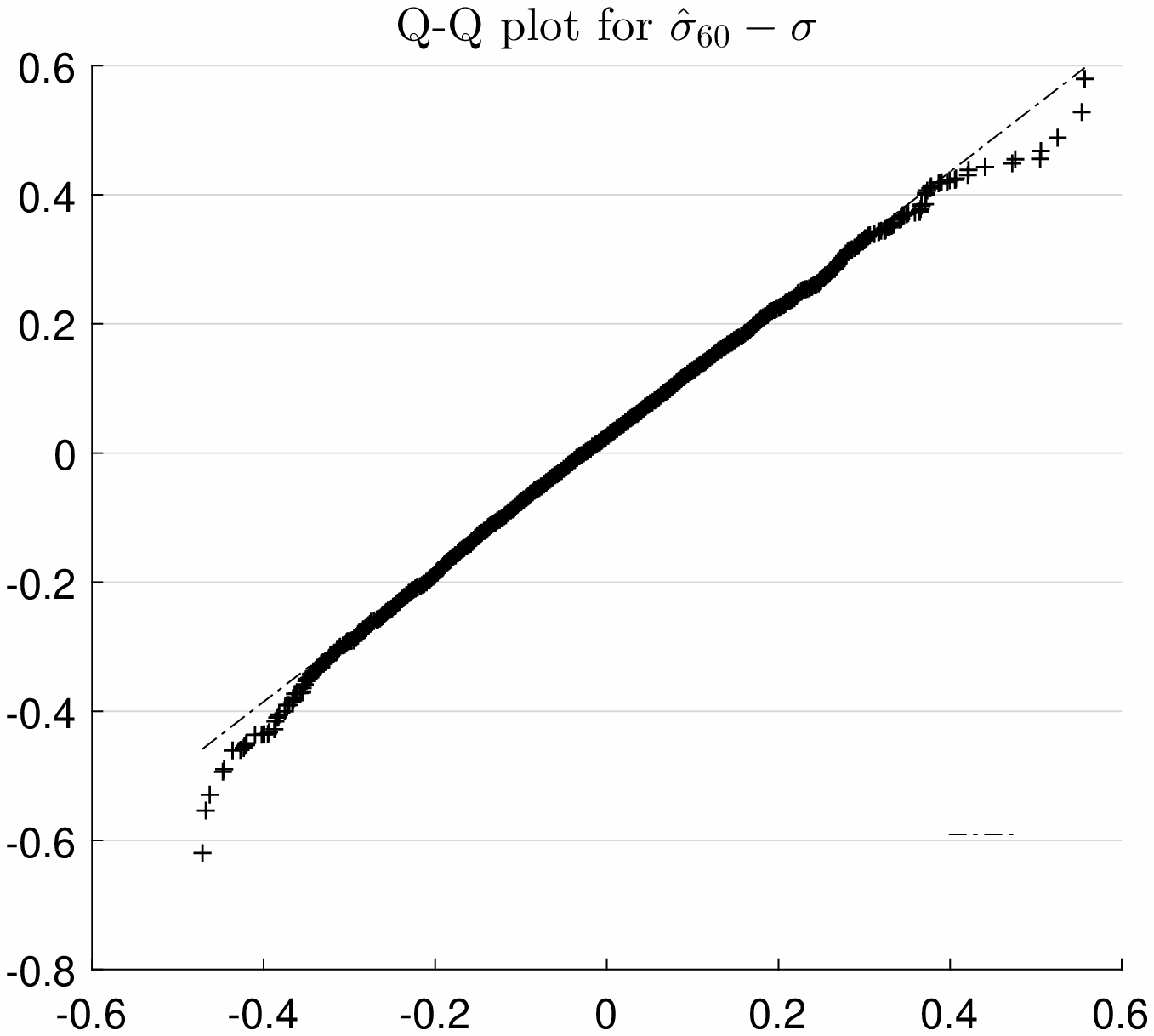}
        %\caption{Competitors} \label{fig:timing2}
    \end{subfigure}
    \caption{Shell model. Q-Q-Plot of $\hat\theta_{60}-\theta$ (left panel) and $\hat\sigma_{60}-\sigma$   vs the corresponding theoretical normal distribution from asymptotic normality.}
    \label{fig:Ex1ShellFig5}
\end{figure}

\begin{remark}
We ran the numerical experiments for different shell models of the type  \eqref{eq:FracLap}, and all obtained results agree with theoretical ones. For example, with $\mu_k=1, \ \nu_k=k^4, \ q_k=k, \ p_k=0$ and all other parameters as in Example~2, the solution exists, but \eqref{reg-c1} is not satisfied, and as expected, estimators do not converge. On the other hand, with $\mu_k=k^2,\ q_k=k^{3/2},\ \nu_k=p_k=0$, solution of \eqref{eq:FracLap} does not exists, but \eqref{reg-c1} and \eqref{reg-c2} are satisfied, and formally computed estimates converge.
Other set of parameters, e.g. $\mu_k=k^2, \ \nu_k=0, \ q_k= k^{\frac14}(\log(1+k))^{\frac12}, \ p_k=0$,  for which the solution exists and \eqref{reg-c1} and \eqref{reg-c2} are satisfied, produce similar results as in Example~1. We also computed the estimates for $\theta$ and $\sigma$ using Bayesian approach, and overall the results look similar to the MLE, although they are less stable numerically and more advanced numerical methods may need to be implemented.  \end{remark}

\noindent
\textbf{Example 2. Additive noise.} We consider the equation \eqref{eq:FrLapAdd}, with $\beta=1$, $G=[0,\pi]$, $d=1$.
Thus, $\lambda_k=k^2, \ h_k(x)=\sqrt{2/\pi}\sin(kx), \ k\in\bN$, and we take the following set of parameters
$$
\theta_0=0.1, \ \sigma_0=0.1, \ T=1, \ q_k=1, u(0)=0.
$$
We will use similar numerical experiments as in Example~1, and compute the Fourier modes $u_k$ by applying directly \eqref{FK-add}.  First we assume that $\sigma$ is known and apply Theorem~\ref{th-t2} and, respectively,  Theorem~\ref{th-t3} to compute `the exact estimators' for $\theta$ by using the values of the Fourier modes at two time points and, respectively, three time points, not counting the value at $t=0$. The obtained estimated value $\theta'$ for $\theta$ are virtually indistinguishable from the true parameter, with the relative error $|\theta'-\theta|/\theta<10^{-12}$, for any combination of chosen time points in $(0,1)$, and/or the Fourier mode $u_k, k=1,\ldots,60$.

Next we assume that $\theta$ is known, and we estimate $\sigma$ by the MLE from Theorem~\ref{th:AdditiveMLESigma}.  We conducted several numerical experiments to confirm the results of Theorem 3.5, and all experiments produced results similar to what we present below. In Figure~\ref{fig:Ex1AdditiveFig2} (left panel) we display one typical realization of $\hat\sigma_N$ (circled lines), which converges to the true value $\sigma=0.1$ (solid line).
Using $5,\!000$ simulated paths of the first 55 Fourier coefficients, we compute the sample mean of $\hat\sigma_N$, presented in Figure~\ref{fig:Ex1AdditiveFig2} (right panel). Sample standard deviation of $\hat\sigma_N$ and its theoretical value from the asymptotic normality, are displayed in Figure~\ref{fig:Ex1AdditiveFig3} (left panel). Similar to Example~1, the sample mean of the estimates converges to the true value, as the number of the Fourier modes $N$ increases, and the sample standard deviation of the estimates converges to zero at the rate predicted by the asymptotic normality property.
Finally, in Figure~\ref{fig:Ex1AdditiveFig3} (right panel), we present the empirical distribution of $\hat\sigma_N-\sigma$ for $N=55$, on which we superposed the distribution of Gaussian random variable (solid line) with mean zero and variance $\sigma^2/(2N)$, which validate the asymptotic normality of the estimators.
Various other model parameterizations consistently yield similar results, and the obtained numerical results agree with the theoretical results on consistency and asymptotic normality of $\hat\sigma_N$ from Theorem~\ref{th:AdditiveMLESigma}.

\begin{figure}[!ht]
    \centering
    \begin{subfigure}[t]{0.42\textwidth}
        \centering
        \includegraphics[width=\linewidth]{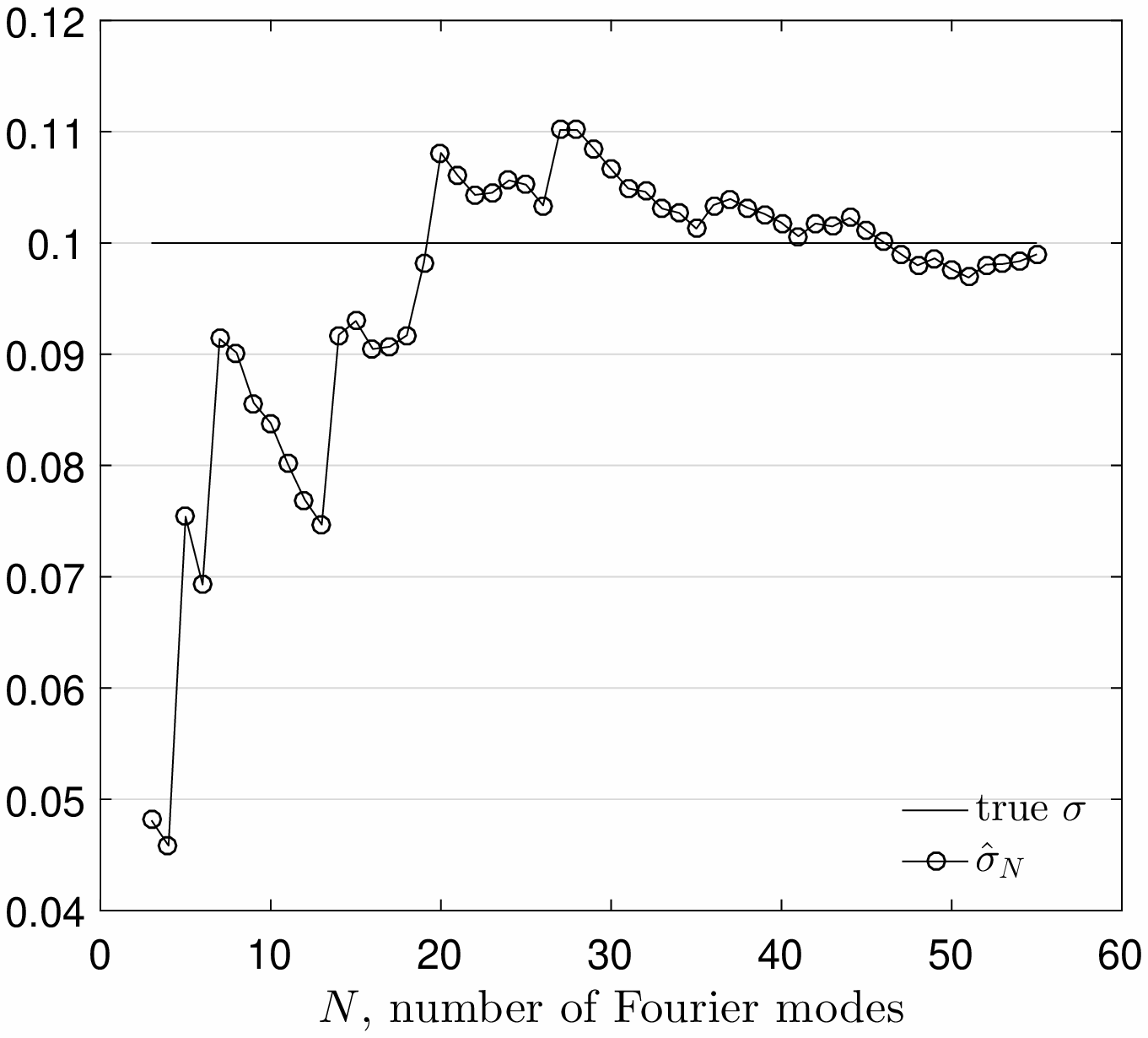}
        %\caption{Generic} \label{fig:timing1}
    \end{subfigure}
    \hfill
    \begin{subfigure}[t]{0.48\textwidth}
        \centering
        \includegraphics[width=\linewidth]{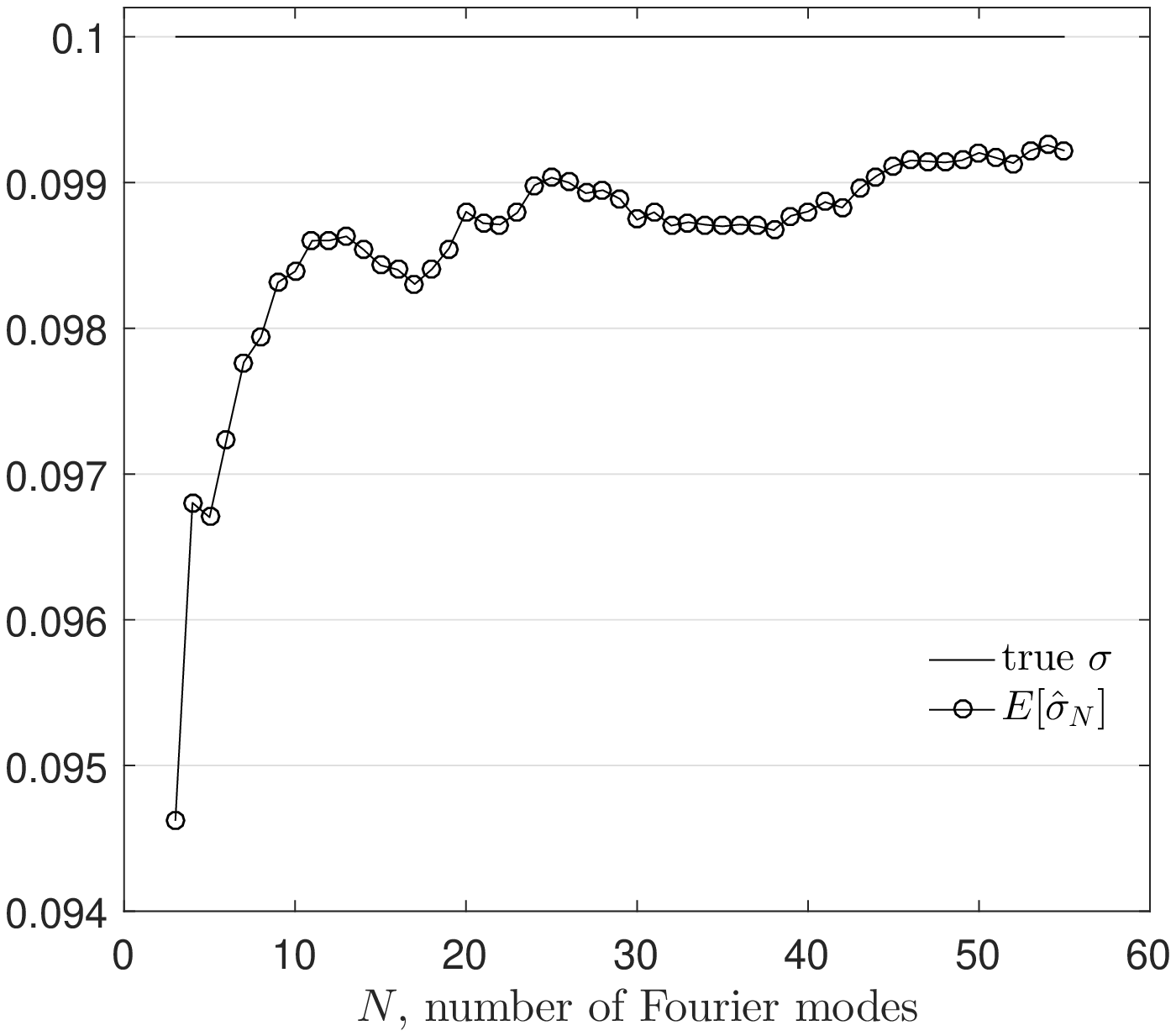}
        %\caption{Competitors} \label{fig:timing2}
    \end{subfigure}
    \caption{Additive noise. One sample path of $\hat\sigma_N$ (left panel) and the sample mean of $\hat\sigma_N$ (right panel).}
    \label{fig:Ex1AdditiveFig2}
\end{figure}

\begin{figure}[!ht]
    \centering
    \begin{subfigure}[t]{0.43\textwidth}
        \centering
        \includegraphics[width=\linewidth]{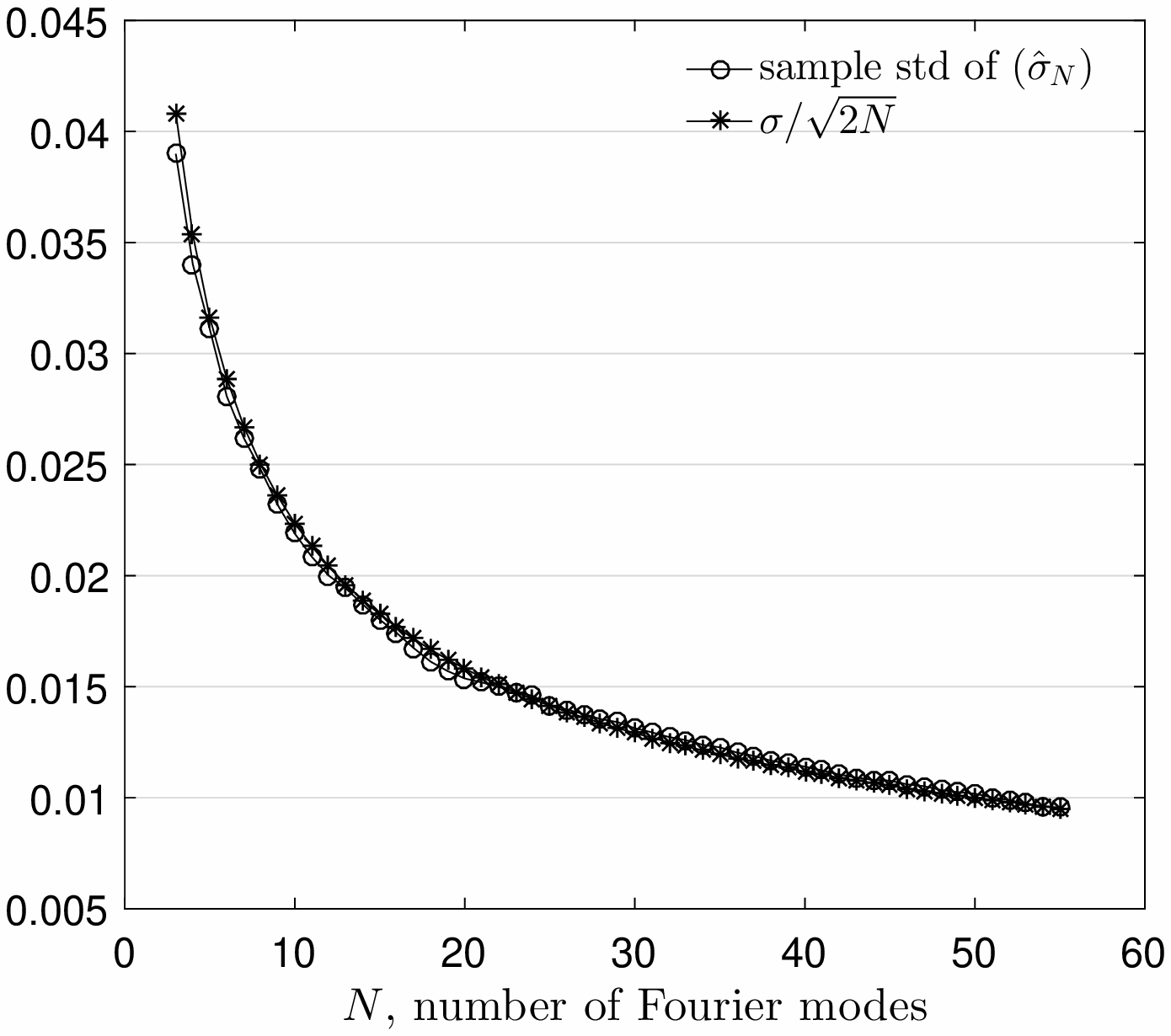}
        %\caption{Generic} \label{fig:timing1}
    \end{subfigure}
    \hfill
    \begin{subfigure}[t]{0.48\textwidth}
        \centering
        \includegraphics[width=\linewidth]{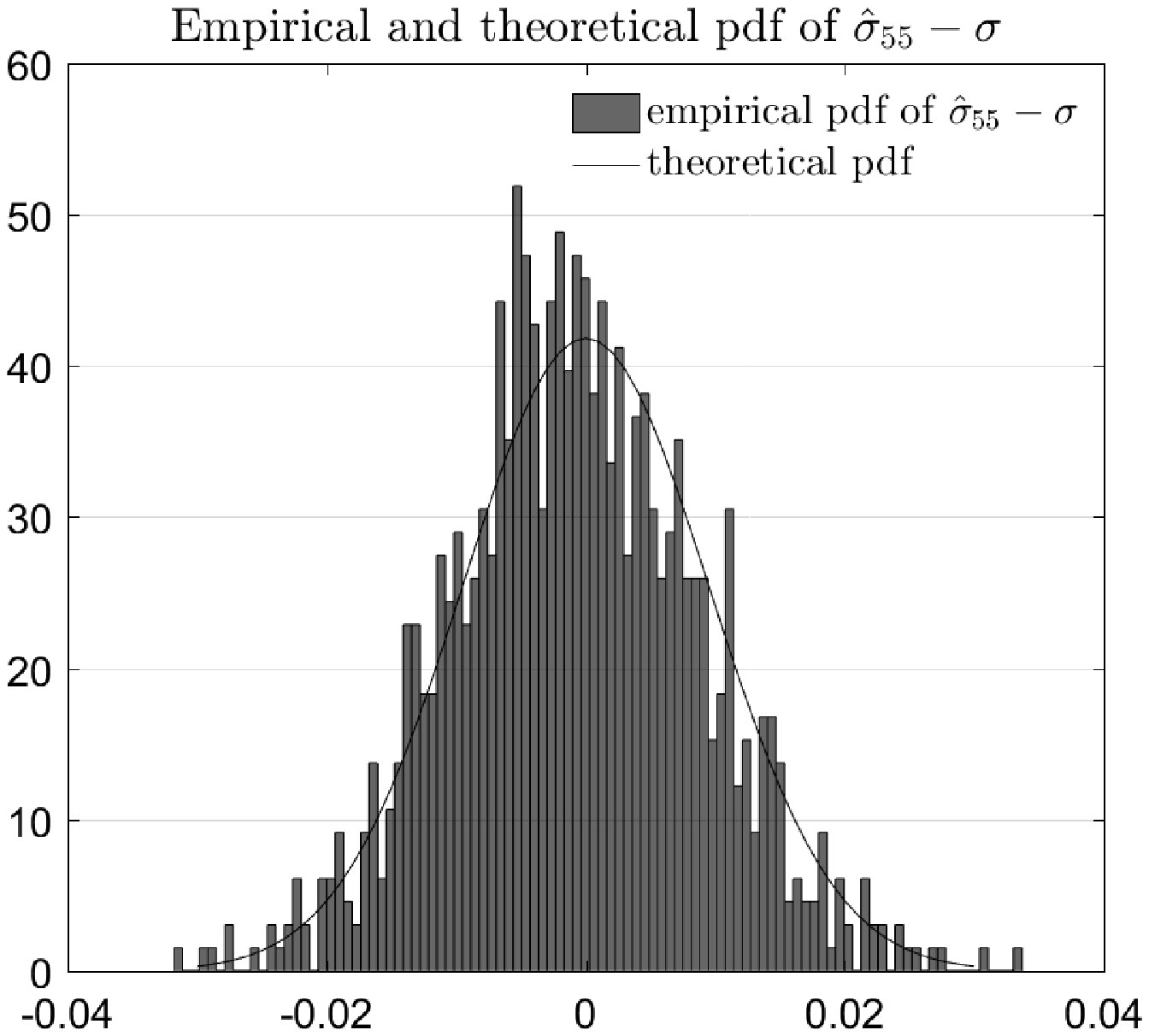}
        %\caption{Competitors} \label{fig:timing2}
    \end{subfigure}
    \caption{Additive noise. Sample standard deviation of $\hat\sigma_N$ and theoretical standard deviation $\sigma\sqrt{1/2N}$ from asymptotic normality (left panel) and  empirical distribution of $\hat\sigma_{55}-\theta$ and the pdf (solid lines) of the theoretical normal distribution from asymptotic normality (right panel).}
    \label{fig:Ex1AdditiveFig3}
\end{figure}

We conclude this example by applying the results from Section~\ref{sec:disc-samp-add}, assuming that $\sigma$ is known and $\theta$ is the parameter of interest. We postulate that the observer takes measurements of the spacial derivative of the solution $u_x$ at a fixed time point $t=0.2$ and over a space interval $[0,x]\subset[0,\pi]$. We approximate the function $u_x$ using the series representation \eqref{eq:ux}, and by taking the first $30,\!000$ terms in the series and a space resolution of $0.0015$. In Figure~\ref{fig:Ex1AdditiveFig4} (left panel)  we display the estimates $\check\theta_M$, and $\check\sigma_M$, given by \eqref{eq:thetaEST}, and \eqref{eq:sigmaEST}, and using the values of $u_x(t,\cdot)$ from interval $[0,x]$, for a set of values of $x\in(0,\pi]$. The obtained values are close to the true values. We also applied a similar approach to study the estimators  \eqref{eq:estTheta2} and \eqref{eq:estSigma2}. However, the slow convergence rate of the Fourier series combined with low smoothness order of $u$ produce less desirable numerical results, see Figure~\ref{fig:Ex1AdditiveFig4} (right panel). Further investigations are needed, tentatively by employing more accurate numerical methods to approximate the solution.

\begin{figure}[!ht]
    \centering
    \begin{subfigure}[t]{0.48\textwidth}
        \centering
        \includegraphics[width=\linewidth]{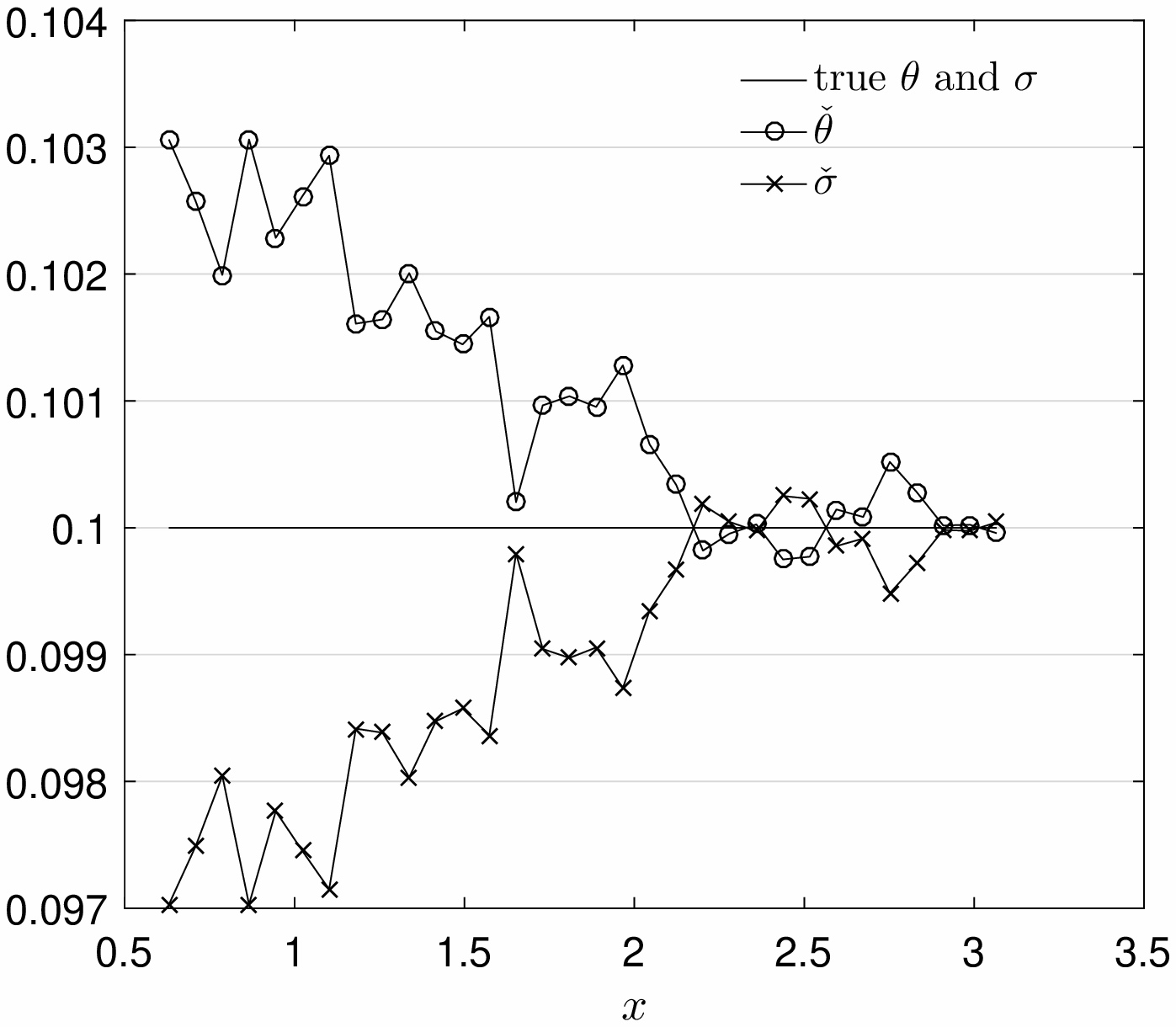}
        %\caption{Generic} \label{fig:timing1}
    \end{subfigure}
    \hfill
    \begin{subfigure}[t]{0.48\textwidth}
        \centering
        \includegraphics[width=\linewidth]{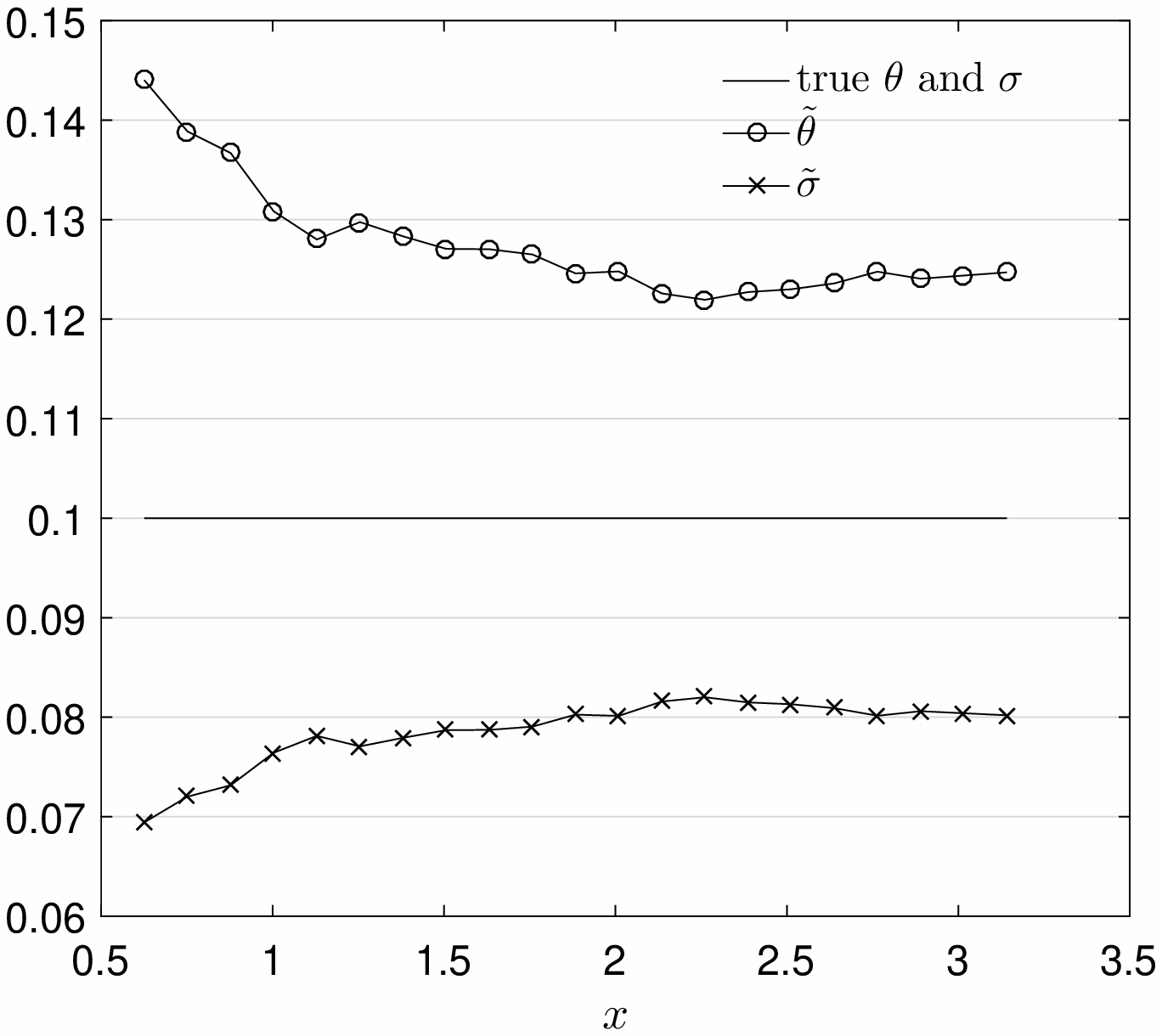}
        %\caption{Competitors} \label{fig:timing2}
    \end{subfigure}
    \caption{Additive noise. Discrete sampling. Left panel: values of $\check\theta$ and $\check\sigma$ by using values of $u_x(t,\cdot)$. Right panel:
     values of $\tilde\theta$ and $\tilde\sigma$ by using values of $u(t,\cdot)$. The horizontal axes $x$ indicates the right end point of $[0,x]$ over which the values of $u_x$ and $u$ were computed.}
    \label{fig:Ex1AdditiveFig4}
\end{figure}

%\section{Conflict of interest statement}
%On behalf of all authors, the corresponding author states that there is no conflict of interest. 

\end{document}